\pgfplotsset{compat=1.12}
\tikzset{every mark/.append style={scale=0.1}}
      \def\dC{{\mathbb C}}
      \def\dR{{\mathbb R}}
      \def\cF{{\mathcal F}}
   \def\cH{{\mathcal H}}   
   \def\cK{{\mathcal K}}   
\def\cM{{\mathcal M}}   \def\cN{{\mathcal N}}
\let\xker=\ker \def\ker{{\xker\,}}
\newcommand{\Pluspunkt}{\stackrel{.}{+}}
\DeclareMathOperator{\RE}{Re}
\DeclareMathOperator{\IM}{Im}
\DeclareMathOperator{\sgn}{sgn}
\newcommand{\sigmap}{\sigma_{\rm p}}
\newcommand{\rd}{\mathrm{d}}
\newcommand{\Skdef}{\langle\cdot,\cdot\rangle}
\newcommand{\Skindef}{[\cdot,\cdot]}
\newcommand{\matriz}[4]{\left[\;\begin{matrix} #1 & #2 \\[0.2ex]
#3 & #4 \end{matrix}\;\right]}
\newcommand{\vect}[2]{\left[\;\begin{matrix} #1 \\[0.2ex] #2 \end{matrix}\;\right]}
\newcommand{\ra}{\rightarrow}
\newcommand{\ol}{\overline}
\newcommand{\PI}[2]{\left\langle #1, #2\right\rangle}
\newcommand{\K}[2]{\left[ #1, #2\right]}
\newcommand{\ort}{[\perp]}
\newcommand{\sdo}{[\dotplus]}
\DeclareMathOperator{\linspan}{span}
\newcommand{\defeq}{\mathrel{\mathop:}=}
\newtheorem{theorem}{Theorem}[section]
\newtheorem{proposition}[theorem]{Proposition}
\newtheorem{corollary}[theorem]{Corollary}
\theoremstyle{definition}
\newtheorem{definition}[theorem]{Definition}
\newtheorem{remark}[theorem]{Remark}
\newcounter{counter_a}
\newenvironment{myenum}{\begin{list}{{\rm(\roman{counter_a})}}%
{\usecounter{counter_a}
\setlength{\itemsep}{0.5ex}\setlength{\topsep}{0.7ex}
\setlength{\leftmargin}{5ex}\setlength{\labelwidth}{5ex}}}{\end{list}}
\newcounter{counter_b}
\newcommand\void[1]{}
\numberwithin{equation}{section}
\definecolor{darkgreen}{rgb}{0.,0.6,0.}
\newcommand\proj[2]{P_{#1/\!/#2}}
\begin{document}
\title{Spectrum of $J$-frame operators}

\author[J.~Giribet]{Juan Giribet}
\address{Departamento de Matem\'atica -- FI-UBA and \newline
Instituto Argentino de Matem\'{a}tica ``Alberto P.~Calder\'{o}n'' (CONICET), \newline
Saavedra 15 (1083) Buenos Aires, Argentina}
\email{jgiribet@fi.uba.ar}

\author[M.~Langer]{Matthias Langer}
\address{Department of Mathematics and Statistics,
University of Strathclyde, \newline
26 Richmond Street,
Glasgow G1 1XH, United Kingdom}
\email{m.langer@strath.ac.uk}

\author[L.~Leben]{Leslie Leben}
\address{Institut f\"ur  Mathematik,
Technische Universit\"{a}t Ilmenau, \newline
Postfach 100565, D-98684 Ilmenau,
Germany}
\email{leslie.leben@tu-ilmenau.de}

\author[A.~Maestripieri]{Alejandra Maestripieri}
\address{Departamento de Matem\'atica -- FI-UBA and \newline
Instituto Argentino de Matem\'{a}tica ``Alberto P.~Calder\'{o}n'' (CONICET), \newline
Saavedra 15 (1083) Buenos Aires, Argentina}
\email{amaestri@fi.uba.ar}

\author[F.~Mart\'{\i}nez Per\'{\i}a]{Francisco Mart\'{\i}nez Per\'{\i}a}
\address{Departamento de Matem\'{a}tica -- FCE-UNLP and \newline
Instituto Argentino de Matem\'{a}tica ``Alberto P.~Calder\'{o}n'' (CONICET), \newline
Saavedra 15 (1083) Buenos Aires, Argentina}
\email{francisco@mate.unlp.edu.ar}

\author[C.~Trunk]{Carsten Trunk}
\address{Institut f\"ur  Mathematik,
Technische Universit\"{a}t Ilmenau, \newline
Postfach 100565, D-98684 Ilmenau,
Germany}
\email{carsten.trunk@tu-ilmenau.de}

\keywords{Frame, Krein space, block operator matrix, spectrum}
\subjclass[2010]{Primary 47B50; Secondary 47A10, 46C20, 42C15}



\begin{abstract}
A $J$-frame is a frame $\cF$ for a Krein space $(\cH,\Skindef)$
which is compatible with the indefinite inner product $\Skindef$
in the sense that it induces an indefinite reconstruction formula
that resembles those produced by orthonormal bases in $\cH$.
With every $J$-frame the so-called $J$-frame operator is associated,
which is a self-adjoint operator in the Krein space $\cH$.
The $J$-frame operator plays an essential role in the indefinite reconstruction formula.

In this paper we characterize the class of $J$-frame operators in a Krein
space by a $2\times 2$ block operator representation. The $J$-frame
bounds of $\cF$ are then recovered as the suprema and
infima of the numerical ranges of some uniformly positive
operators  which are build from the entries of the
$2\times 2$ block representation. Moreover, this $2\times 2$ block representation
is utilized to obtain enclosures for the spectrum of $J$-frame operators,
which finally leads to the construction of a square root. This square root
allows a complete description of all $J$-frames associated
with a given $J$-frame operator.
\end{abstract}

\maketitle

\section{Introduction}

\noindent
Frame theory is a key tool in signal and image processing, data compression,
and sampling theory, among other applications in engineering,
applied mathematics, and computer sciences.
The major advantage of a frame over an orthonormal, orthogonal,
or Riesz basis is its redundancy: each vector admits several
different reconstructions in terms of the frame coefficients.
For instance, frames have shown to be useful in signal processing applications
when noisy channels are involved, because a frame allows to
reconstruct vectors (signals) even if some of the frame coefficients
are missing (or corrupted); see \cite{BP05,HP04,SH03}.

A frame for a Hilbert space $(\cH,\Skdef)$ is an, in general, redundant (overcomplete)
family of vectors $\cF=\{f_i\}_{i\in I}$ in $\cH$ for which there exists
a pair of positive constants $0<\alpha\leq \beta$ such that
\begin{equation}\label{Schwaben}
  \alpha\, \|f\|^2 \leq \sum_{i\in I} |\PI{f}{f_i}|^2 \leq \beta\, \|f\|^2
  \qquad \text{for every}\;\; f\in \cH.
\end{equation}
Note that these inequalities establish an equivalence between the
norm of the vector $f\in \cH$ and the $\ell_2$-norm of its
frame coefficients $\{\PI{f}{f_i}\}_{i\in I}$.

Every frame $\cF$ for $\cH$ has an associated frame operator $S:\cH\ra\cH$,
which is uniformly positive (i.e.\ positive-definite and boundedly invertible)
in $\cH$, and it allows to reconstruct each $f\in \cH$ as follows:
\begin{equation*}
  f = \sum_{i\in I}\PI{f}{f_i}\, S^{-1}f_i=\sum_{i\in I}\PI{f}{S^{-1}f_i}f_i.
\end{equation*}
Given a uniformly positive operator $S:\cH\ra\cH$ it is possible to describe
the complete family of frames whose frame operator is $S$.
Indeed, if $\cF=\{f_i\}_{i\in I}$ is a frame for $\cH$ then the upper
bound in \eqref{Schwaben} implies that the so-called synthesis operator $T:\ell_2(I)\ra\cH$,
\[
  Tx = \sum_{i\in I}\PI{x}{e_i}f_i,
\]
is bounded,
where $\{e_i\}_{i\in I}$ is the standard orthonormal basis in $\ell_2(I)$.
Then its frame operator is $S$ if and only if
\[
  S=TT^*.
\]
Therefore, $S$ is the frame operator of $\cF$ if and only if the
(left) polar decomposition of $T$ can be written as
\[
  T = S^{1/2}V,
\]
for some co-isometry $V:\ell_2(I)\ra\cH$; see for instance \cite[Corollary~2.15]{CPS}.
Hence, the family of frames whose frame operator is $S$ is in a
one-to-one correspondence with the family of co-isometries onto the Hilbert space $\cH$.
Also, note that this description is possible due to the existence
of a (positive) square root of $S$.

\medskip

Recently, various approaches to introduce frame theory on Krein spaces have
been suggested; see \cite{EFW,GMMM12,PW}.  In the current paper we further
investigate the notion of $J$-frames that was proposed in \cite{GMMM12}.
This concept was motivated by a signal processing problem,
where signals are disturbed with the same energy at high and low band frequencies;
see the discussion at the beginning of Section~3 in \cite{GMMM12}.

Given a Bessel family (that is a family which satisfies
the upper bound in \eqref{Schwaben}) of vectors $\cF=\{f_i\}_{i\in I}$
in a Krein space $(\cH,\Skindef)$, we divide it in a family $\cF_+=\{f_i\}_{i\in I_+}$
of non-negative vectors in $\cH$ and a family $\cF_-=\{f_i\}_{i\in I_-}$
of negative vectors in $\cH$.
Roughly speaking, $\cF=\{f_i\}_{i\in I}$ is a $J$-frame for $(\cH,\Skindef)$
if the ranges of the synthesis operators $T_+$ and $T_-$ associated
with $\cF_+$ and $\cF_-$ are a maximal uniformly positive subspace and
a maximal uniformly negative subspace of $\cH$, respectively
(see Subsection~\ref{Jframes} for further details).
In particular, it is immediate that orthonormal bases in Krein spaces
are $J$-frames as they generate maximal dual pairs \cite[Section~1.10]{AI89}.

Each $J$-frame for $(\cH,\Skindef)$ is also a frame
(in the Hilbert space sense) for $\cH$.  Moreover, $\cF_+=\{f_i\}_{i\in I_+}$
and $\cF_-=\{f_i\}_{i\in I_-}$ are frames for the Hilbert spaces $(R(T_+),\Skindef)$
and $(R(T_-),-\Skindef)$, respectively.  However, it is easy to construct
frames for $\cH$ which are not $J$-frames; see \cite[Example~3.3]{GMMM12}.

For each $J$-frame for $\cH$ one can define a $J$-frame operator $S:\cH\ra\cH$
(see \eqref{Iphofen} below), which is a bounded and boundedly invertible
self-adjoint operator in $(\cH,\Skindef)$.
It can be used to describe every vector in $\cH$ in terms of the
frame vectors $\cF=\{f_i\}_{i\in I}$:
\[
  f = \sum_{i\in I} \sigma_i\K{f}{S^{-1}f_i}f_i= \sum_{i\in I} \sigma_i\K{f}{f_i}S^{-1}f_i,
  \qquad f\in\cH,
\]
where $\sigma_i=\sgn\K{f_i}{f_i}$.
This is known as the indefinite reconstruction formula associated with $\cF$
since it resembles the reconstruction formula provided by an orthonormal basis
in a Krein space.

If $T:\ell_2(I)\ra \cH$ is the synthesis operator of $\cF$, then $S$
is given by
\begin{equation}\label{Iphofen}
  S= TT^+,
\end{equation}
where $T^+$ stands for the adjoint of $T$
with respect to $\Skindef$.  Hence, given a $J$-frame operator $S$
in $(\cH,\Skindef)$, it is natural to look for descriptions of the
family of $J$-frames whose $J$-frame operator is $S$.
This is a non-trivial problem due to several reasons.

First, the existence of a square root of a self-adjoint operator
in a Krein space depends on the location of its spectrum.
Some characterizations of $J$-frame operators can be found
in \cite[Subsection~5.2]{GMMM12}, but none of them guarantees
the existence of a square root.

Second, in general there is no polar decomposition for linear operators
acting between Krein spaces; see \cite{BMRRR97, MRR99, MRR02, MRR05}.

\medskip

The aim of this work is to obtain a deeper insight on $J$-frame operators, in particular,
to obtain enclosures for the spectrum of a $J$-frame operator and
 a full description of the $J$-frames associated with a
prescribed $J$-frame operator. The two main results of the
paper are the following.
\begin{itemize}
  \item[\rm (i)] The spectrum of a $J$-frame operator is always contained
  in the open right half-plane with a positive distance to the imaginary
  axis. This distance can be estimated in terms of the $J$-frame bounds.
  \item[\rm (ii)] Due to the location of the spectrum according to item (i),
  there exists a square root of a $J$-frame operator. This enables one to
  show that there is a bijection between all co-isometries and all $J$-frames
  with the same $J$-frame operator.
\end{itemize}

\medskip

The paper is organized as follows.  Section~\ref{sec:pre} contains preliminaries
both on frames for Hilbert spaces and for Krein spaces.
There we recall the notions of $J$-frames and $J$-frame operators,
and we present some known results.

In Section~\ref{sec:matrix}, given a bounded self-adjoint operator $S$
acting in a Krein space $(\cH,\Skindef)$, we describe $J$-frame operators as
block operator matrices (with respect to a suitable fundamental decomposition)
such that their entries have some particular properties.
For instance, we prove that $S$ is a $J$-frame operator if and only if
there exists a fundamental decomposition $\cH=\cH_+\sdo \cH_-$ such that
\[
  S = \matriz{A}{-AK}{K^*A}{D},
\]
where $A$ is uniformly positive in the Hilbert space $(\cH_+,\Skindef)$, $K:\cH_-\ra\cH_+$
satisfies $\|K\|<1$, and $D$ is a self-adjoint operator such that $D+K^*AK$
is uniformly positive in the Hilbert space
$(\cH_-,-\Skindef)$; see Theorem~\ref{Irland} below.
Also, a dual representation is given in Theorem~\ref{Scotland} below,
where the roles of the operators in the diagonal of the
block operator matrix are interchanged.
Using these two representations we describe the inverse of $S$ and
the positive operators $S_\pm$, which are defined in \eqref{eses} below and
satisfy $S=S_+-S_-$.

Given a $J$-frame $\cF=\{f_i\}_{i\in I}$ for a Krein space $(\cH,\Skindef)$,
the $J$-frame bounds of $\cF$ are the two pairs of frame bounds associated
with $\cF_+$ and $\cF_-$ as frames for the Hilbert spaces $(R(T_+), \Skindef)$
and $(R(T_-), -\Skindef)$, respectively.
In Section~\ref{sec:framebounds} we recover the $J$-frame bounds of $\cF$ as
spectral bounds for the $J$-frame operator $S$ associated with $\cF$.
More precisely, we describe the $J$-frame bounds of $\cF$ in terms of
suprema and infima of the numerical ranges of the uniformly positive entries
in the block operator matrix representations discussed above.

In Section~\ref{sec:spectrum} we use the representations obtained
in Section~\ref{sec:matrix} to obtain enclosures for the real part and
the non-real part of the spectrum of a $J$-frame operator $S$.
In particular, we show that the spectrum of a $J$-frame operator is
always contained in the right half-plane $\{z\in\dC:\, \RE z>0\}$.
Moreover, given a $J$-frame $\cF=\{f_i\}_{i\in I}$ with $J$-frame operator $S$,
we describe the spectral enclosures of $S$ in terms of the $J$-frame bounds of $\cF$.

Finally, in Section~\ref{sec:squareroot} we use the Riesz--Dunford functional calculus
to obtain the square root of a $J$-frame operator.  We characterize the family
of all $J$-frames for $\cH$ whose $J$-frame operator is $S$.
More precisely, the synthesis operator $T:\ell_2(I)\ra\cH$ of a $J$-frame $\cF$
satisfies $TT^+=S$ if and only if
\[
  T = S^{1/2}U,
\]
where $S^{1/2}$ is the square root of $S$ and $U:\ell_2(I)\ra\cH$
is a co-isometry between Krein spaces. Hence, there is a bijection between
all co-isometries and all  $J$-frames with the same $J$-frame operator.

\section{Preliminaries}\label{sec:pre}

\noindent
When $\cH$ and $\cK$ are Hilbert spaces, we denote by $L(\cH, \cK)$ the vector space
of bounded linear operators from $\cH$ into $\cK$ and by $L(\cH)=L(\cH,\cH)$
the algebra of bounded linear operators acting on $\cH$.

For $T\in L(\cH)$ we denote by $\sigma(T)$ and $\sigmap(T)$ the \emph{spectrum}
and the \emph{point spectrum} of $T$, respectively.
Moreover, we denote by $N(T)$, $R(T)$ and $W(T)$ the \emph{kernel}, the \emph{range}
and the \emph{numerical range} of $T$, respectively.

\medskip

Given two closed subspaces $\cM$ and $\cN$ of $\cH$, hereafter $\cM\dotplus \cN$
denotes their direct sum.
Moreover, in case when $\cH=\cM \dotplus\cN$, we denote
the (unique) projection with range $\cM$ and kernel $\cN$ by $\proj{\cM}{\cN}$.
In particular, if $\cN=\cM^\bot$ then $P_\cM=\proj{\cM}{\cM^\bot}$
denotes the orthogonal projection onto $\cM$.

\subsection{Frames for Hilbert spaces}\label{sec:frames}

\noindent
We recall the standard notation for frames for Hilbert spaces and some basic results;
see, e.g.\ \cite{TaF,CK13,Chr,HanLarson}.

\medskip

A \emph{frame} for a Hilbert space $\cH$ is a family of
vectors $\cF=\{f_i\}_{i\in I}$, $f_i\in\cH$, for which there exist
constants $0 < \alpha \le \beta < \infty$ such that
\begin{equation}\label{ecu frames}
  \alpha\,\|f\|^2 \le \sum_{i\in I} |\langle f,f_i\rangle |^2 \le \beta\,\|f\|^2
  \qquad \text{for every $f\in \cH$}.
\end{equation}
The optimal constants (maximal for $\alpha$ and minimal for $\beta$) are called
the \emph{upper} and the \emph{lower frame bounds of $\cF$}, respectively.

If a family of vectors $\cF=\{f_i\}_{i\in I}$ satisfies the upper bound condition
in \eqref{ecu frames}, then $\cF$ is a \emph{Bessel family}.
For a Bessel family $\cF=\{f_i\}_{i\in I}$ the \emph{synthesis operator}
$T\in L(\ell_2(I),\cH)$ is defined by
\[
  Tx = \sum_{i\in I}\PI{x}{e_i}f_i, \qquad x\in\ell_2(I),
\]
where $\{e_i\}_{i\in I}$ is the standard orthonormal basis of $\ell_2(I)$.
A Bessel family $\cF$ is a frame for $\cH$ if and only if $T$ is surjective.
In this case, the operator $S \defeq TT^*\in L(\cH)$ is uniformly positive
and called \emph{frame operator}.  It can easily be verified that
\begin{equation}\label{ecu S}
  Sf = \sum_{i\in I}\PI{f}{f_i}f_i \qquad \text{for every $f\in\cH$}.
\end{equation}
This implies that the frame bounds of $\cF$ can be computed as
$\alpha=\|S^{-1}\|^{-1}$ and $\beta=\|S\|$.

From \eqref{ecu S} it is also easy to obtain the \emph{frame reconstruction formula}
for vectors in $\cH$:
\[
  f = \sum_{i\in I}\PI{f}{S^{-1}f_i}f_i
  = \sum_{i\in I}\PI{f}{f_i}S^{-1}f_i
  \qquad \text{for every $f\in\cH$};
\]
and the frame $\{S^{-1}f_i\}_{i\in I}$ is called the \emph{canonical dual frame} of $\cF$.

\subsection{Krein spaces}

We recall the standard notation and some basic results on Krein spaces.
For a complete exposition on the subject (and the proofs of the results below)
see the books by Azizov and Iokhvidov \cite{AI89} and Bogn\'ar \cite{B74};
see also \cite{A79,DR,K70,R02}.

A vector space $\mathcal H$ with a Hermitian sesquilinear form $\Skindef$ is
called a \emph{Krein space} if there exists a so-called
\emph{fundamental decomposition}
\[
  \mathcal H= \mathcal H_+  \sdo \mathcal H_-,
\]
which is the direct (and orthogonal with respect to $\Skindef$) sum of two
Hilbert spaces $(\mathcal H_+, \Skindef)$ and $(\mathcal H_-, -\Skindef)$.
These two Hilbert spaces induce in a natural way a Hilbert space inner product $\Skdef$
and, hence, a Hilbert space topology  on $\mathcal H$.
Observe that the indefinite metric $\Skindef$ and the
Hilbert space inner product $\Skdef$ of $\mathcal H$ are related by means of a
\emph{fundamental symmetry}, i.e.\ a unitary self-adjoint operator $J\in L(\mathcal H)$
that satisfies
\[
  \langle x, y\rangle = [Jx, y] \qquad \text{for}\;\; x,y\in \mathcal H.
\]
Although the fundamental decomposition is not unique, the norms induced
by different fundamental decompositions turn out to be equivalent;
see, e.g.\ \cite[Proposition~I.1.2]{L82}.
Therefore, the (Hilbert space) topology in $\cH$ does not depend on the
chosen fundamental decomposition.

If $\mathcal H$ and $\mathcal K$ are Krein spaces and
 $T\in L(\mathcal H, \mathcal K)$, the adjoint operator of T is the unique
 operator $T^+\in L(\cK,\cH)$ satisfying
 \[
 [Tx,y]=[x,T^+y] \qquad \text{for every $x\in\cH$, $y\in\cK$}.
 \]
An operator $T\in L(\mathcal H)$
is \emph{self-adjoint in a Krein space} $(\mathcal H, \Skindef)$  if $T = T^+$.
The spectrum of such an operator $T$ is symmetric with respect to the real axis.

\medskip

Let $(\mathcal H, \Skindef)$ be a Krein space.
A vector $x\in\cH$ is called \emph{positive} if $[x, x] > 0$,
\emph{negative} if $[x,x]<0$ and \emph{neutral} if $[x,x]=0$.
A subspace $\mathcal L$ of $\mathcal H$ is \emph{positive} if every
$x \in \mathcal L\setminus\{0\}$ is a positive vector.
A subspace $\mathcal L$ of $\mathcal H$ is \emph{uniformly positive}
if there exists $\alpha > 0$ such that $[x, x] \geq  \alpha\|x\|^2$ for
every $x \in \mathcal L$,
where $\|\cdot\|$ stands for the norm of the associated Hilbert space $(\mathcal H, \Skdef)$.
\emph{Non-negative}, \emph{neutral}, \emph{negative}, \emph{non-positive} and
\emph{uniformly negative subspaces} are defined analogously.

Given a subspace $\cM$ of $(\cH,\Skindef)$, the \emph{orthogonal companion of $\cM$}
is defined by
\[
  \cM^{\ort} \defeq \{ x\in\cH :\, \K{x}{m}=0, \; \text{for every $m\in\cM$}\}.
\]
The \emph{isotropic part} of $\cM$, $\cM^\circ \defeq \cM\cap \cM^{\ort}$,
can be a non-trivial subspace.
A subspace $\cM$ of $\cH$ is \emph{non-degenerate} if $\cM^\circ=\{0\}$.
Otherwise, it is a \emph{degenerate} subspace of $\cH$.

A subspace $\cM$ of $\cH$ is \emph{regular} if $\cM \dotplus \cM^{\ort}=\cH$.
The subspace $\cM$ is regular if and only if there exists a (unique)
self-adjoint projection $E$ onto $\cM$; see, e.g.\ \cite[Theorem~1.7.16]{AI89}.

\subsection{Frames for Krein spaces}\label{Jframes}

\noindent
Recently, there have been various approaches to introduce frame theory on Krein spaces;
see \cite{EFW,GMMM12,PW}.
In the following we recall the notion of $J$-frames introduced in \cite{GMMM12}.
Some particular classes of $J$-frames where also considered in \cite{HKP16}.

Let $(\mathcal H, \Skindef)$ be a Krein space,
let $\cF=\{f_i\}_{i\in I}$ be a Bessel family in $\cH$
with synthesis operator $T:\ell_2(I)\ra\cH$,
and set $I_+\defeq\{i\in I:\, \K{f_i}{f_i} \ge 0\}$
and $I_-\defeq\{i\in I:\, \K{f_i}{f_i}< 0\}$.
Further, consider the orthogonal decomposition of $\ell_2(I)$ induced by the
partition of $I$,
\begin{equation}\label{desc fund}
  \ell_2(I) = \ell_2(I_+) \oplus \ell_2(I_-),
\end{equation}
denote by $P_\pm$ the orthogonal projections onto $\ell_2(I_\pm)$, respectively,
and set $T_\pm \defeq TP_\pm$, i.e.\
\begin{equation}\label{tes}
  T_\pm x = \sum_{i\in I_\pm}\PI{x}{e_i}f_i, \qquad x\in\ell_2(I).
\end{equation}

The following sets play an important role in the following:
\begin{equation}\label{emes}
  \cM_\pm\defeq \ol{\linspan\{f_i:\, i\in I_\pm\}}.
\end{equation}
They are related to the ranges of $T_+$ and $T_-$ as follows:
\[
  \linspan\{f_i:\ i\in I_\pm\}\subseteq R(T_\pm)\subseteq \cM_\pm.
\]
The ranges of $T_+$ and $T_-$ also satisfy $R(T)=R(T_+) + R(T_-)$
and play an essential role in the definition of $J$-frames.

\begin{definition}
Let $\cF=\{f_i\}_{i\in I}$ be a Bessel family in a Krein space $\cH$ and
let $T_\pm$ be as in \eqref{tes}.
Then $\cF$ is called a \emph{$J$-frame} for $\cH$
if $R(T_+)$ is a maximal uniformly positive subspace
and $R(T_-)$ is a maximal uniformly negative subspace of $\cH$.
\end{definition}

If $\cF$ is a $J$-frame for $\cH$, then, $R(T_\pm)=\cM_\pm$ and,
\begin{equation}\label{suma}
  R(T) = R(T_+) \dotplus R(T_-) = \cM_+ \dotplus \cM_-=\cH,
\end{equation}
where the last equality follows from \cite[Corollary~1.5.2]{AI89}.
Thus, $\cF$ is also a frame for the Hilbert space $(\cH, \Skdef)$.
Moreover, it is easy to see that $\cF_\pm\defeq \{f_i\}_{i\in I_\pm}$
is a frame for the Hilbert space $(\cM_\pm, \pm\Skindef)$.

\medskip
The following is a characterization of $J$-frames in terms of frame inequalities;
see \cite[Theorem~3.9]{GMMM12}.

\begin{theorem}\label{thm J frame bounds}
Let $\cF=\{f_i\}_{i\in I}$ be a frame for $\cH$.  Then $\cF$ is a $J$-frame
if and only if $\cM_\pm$ (defined as in \eqref{emes}) are non-degenerate subspaces
of $\cH$ and there exist constants $0<\alpha_\pm \leq \beta_\pm$ such that
\begin{equation}\label{eq J frame bounds}
  \alpha_\pm (\pm\K{f}{f}) \le \sum_{i\in I_\pm} \big|\K{f}{f_i}\big|^2
  \le \beta_\pm (\pm\K{f}{f})\,
  \qquad \text{for every $f\in \cM_\pm$}.
\end{equation}
\end{theorem}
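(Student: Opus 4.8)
The plan is to work through the two implications separately, using throughout the description of a $J$-frame in terms of the synthesis operators $T_\pm$ and the fact (already recalled in the excerpt) that $\cF$ being a $J$-frame forces $R(T_\pm)=\cM_\pm$ and $\cH=\cM_+\dotplus\cM_-$. The guiding idea is that the inequalities in \eqref{eq J frame bounds} are precisely the ordinary Hilbert space frame inequalities for $\cF_\pm$ inside the Hilbert space $(\cM_\pm,\pm\Skindef)$, so the whole argument is about translating ``maximal uniformly positive/negative range'' into ``$\cM_\pm$ non-degenerate plus frame inequalities for $\pm[\cdot,\cdot]$''.

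First I would prove the forward implication. Assume $\cF$ is a $J$-frame. Then $R(T_+)=\cM_+$ is maximal uniformly positive, hence in particular uniformly positive, so $(\cM_+,[\cdot,\cdot])$ is a Hilbert space whose norm is equivalent to the ambient $\|\cdot\|$; in particular $\cM_+$ is non-degenerate, and symmetrically for $\cM_-$ with $-[\cdot,\cdot]$. Next, since $\cF_+=\{f_i\}_{i\in I_+}$ has synthesis operator $T_+$ (viewed into $\cM_+$) which is surjective onto $\cM_+$, it is a frame for the Hilbert space $(\cM_+,[\cdot,\cdot])$; its Hilbert space frame inequalities read exactly
\[
  \alpha_+\,[f,f]\le\sum_{i\in I_+}\bigl|[f,f_i]\bigr|^2\le\beta_+\,[f,f],\qquad f\in\cM_+,
\]
because the inner product of that Hilbert space is $[\cdot,\cdot]$ and the frame coefficients are $[f,f_i]$. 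Here one must check that for $f\in\cM_+$ the coefficient sequence $([f,f_i])_{i\in I_+}$ really is the image of $f$ under the analysis operator of $\cF_+$ in $(\cM_+,[\cdot,\cdot])$; this is immediate since $f_i\in\cM_+$ for $i\in I_+$. The same argument on $\cM_-$ with the flipped sign gives the lower-index inequalities. This establishes \eqref{eq J frame bounds}.

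For the converse, assume $\cF$ is a frame for $\cH$, that $\cM_\pm$ are non-degenerate, and that \eqref{eq J frame bounds} holds. The lower inequality $\alpha_+[f,f]\le\sum_{i\in I_+}|[f,f_i]|^2$ for $f\in\cM_+$ combined with the Bessel bound of $\cF$ (which controls $\sum_{i\in I_+}|[f,f_i]|^2=\sum_{i\in I_+}|\langle f,Jf_i\rangle|^2$ from above by a constant times $\|f\|^2$) shows $[f,f]\ge c\|f\|^2$ on $\cM_+$, i.e.\ $\cM_+$ is uniformly positive; symmetrically $\cM_-$ is uniformly negative. It remains to upgrade ``uniformly positive'' to ``maximal uniformly positive'' and to identify $R(T_\pm)$ with $\cM_\pm$. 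For the range identification: the upper inequality in \eqref{eq J frame bounds} says $\cF_+$ is a Bessel family for $(\cM_+,[\cdot,\cdot])$ and the lower one, together with the fact that $(\cM_+,[\cdot,\cdot])$ is a Hilbert space, says $\cF_+$ is a frame for $\cM_+$, hence its synthesis operator $T_+$ is onto $\cM_+$, so $R(T_+)=\cM_+$. For maximality: since $\cF$ is a frame for $\cH$ we have $R(T)=\cH$, and since $R(T)=R(T_+)+R(T_-)=\cM_++\cM_-$ we get $\cM_++\cM_-=\cH$; a uniformly positive subspace $\cM_+$ with a uniformly negative complement must be maximal uniformly positive (a strictly larger uniformly positive subspace would meet the uniformly negative $\cM_-$ nontrivially, contradicting the sign of $[\cdot,\cdot]$ there, after checking the sum is direct, which follows because $\cM_+\cap\cM_-$ is simultaneously $\ge0$ and $\le0$ hence neutral, hence $\{0\}$ by non-degeneracy combined with uniform definiteness). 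Symmetrically $\cM_-=R(T_-)$ is maximal uniformly negative. Hence $\cF$ is a $J$-frame.

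The main obstacle I expect is the maximality step in the converse direction: one has to be careful that ``uniformly positive $\cM_+$ plus uniformly negative $\cM_-$ plus $\cM_++\cM_-=\cH$'' genuinely yields \emph{maximal} uniform positivity, which relies on the standard Krein space fact that the sum of a uniformly positive and a uniformly negative subspace with $\cH$ as span is automatically a direct (even topologically direct) sum and that each summand is then maximal in its class; citing \cite[Corollary~1.5.2]{AI89} as in \eqref{suma} should do this cleanly. Everything else is the routine dictionary between Krein space objects and their Hilbert space counterparts under the fundamental symmetry $J$.
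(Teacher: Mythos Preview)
The paper does not prove this theorem at all; it is quoted as a preliminary result from \cite[Theorem~3.9]{GMMM12}, so there is no in-paper proof to compare your proposal against. Your forward implication is correct and is essentially the argument one would expect.

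In the converse direction, however, there is a genuine gap at the step where you claim uniform positivity of $\cM_+$. You write that the lower inequality $\alpha_+[f,f]\le\sum_{i\in I_+}|[f,f_i]|^2$ together with the Bessel bound $\sum_{i\in I_+}|[f,f_i]|^2\le C\|f\|^2$ ``shows $[f,f]\ge c\|f\|^2$ on $\cM_+$''. Chaining those two inequalities actually gives $\alpha_+[f,f]\le C\|f\|^2$, i.e.\ $[f,f]\le(C/\alpha_+)\|f\|^2$, which is the opposite direction and tells you nothing about uniform positivity. To obtain $[f,f]\ge c\|f\|^2$ you would need a \emph{lower} bound of the form $\sum_{i\in I_+}|[f,f_i]|^2\ge c\|f\|^2$ for $f\in\cM_+$, to pair with the \emph{upper} inequality in \eqref{eq J frame bounds}; the global frame property of $\cF$ only gives $\sum_{i\in I}|[f,f_i]|^2\ge\alpha\|f\|^2$, and you have no a priori control on the $I_-$-part of that sum for $f\in\cM_+$.

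Your fallback plan---deduce maximality from $\cM_+\dotplus\cM_-=\cH$ with $\cM_\pm$ closed and of opposite sign---does not rescue the argument either: a closed positive subspace can have a closed uniformly negative direct complement without being uniformly positive (take $\cM_+=\{(x,Kx):x\in\cH_+\}$ for a strict contraction $K$ with $\|K\|=1$ and $\cM_-=\{0\}\oplus\cH_-$). The reference \cite[Corollary~1.5.2]{AI89} used at \eqref{suma} goes the other way, from maximal uniform definiteness to the decomposition, not conversely. So the passage from ``non-degenerate plus \eqref{eq J frame bounds}'' to ``uniformly definite'' really needs an additional argument exploiting the full frame hypothesis on $\cF$; this is where the work in \cite{GMMM12} lies.
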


When $\cF=\{f_i\}_{i\in I}$ is a $J$-frame for $\cH$, we
endow the coefficient space $\ell_2(I)$ with the following indefinite inner product:
\begin{equation*}\label{l2}
  \K{x}{y}_2 \defeq \sum_{i\in I_+} x_i \ol{y_i}- \sum_{i\in I_-} x_i \ol{y_i},
  \qquad x=(x_i)_{i\in I},\, y=(y_i)_{i\in I} \in \ell_2(I).
\end{equation*}
Then, $(\ell_2(I),\Skindef_2)$ is a Krein space and \eqref{desc fund} is a
fundamental decomposition of $\ell_2(I)$.
Now, if $T:\ell_2(I)\ra\cH$ is the synthesis operator of $\cF$,
its adjoint (in the sense of Krein spaces) is given by
\begin{equation*}
  T^+f=\sum_{i\in I_+} \K{f}{f_i}e_i- \sum_{i\in I_-} \K{f}{f_i}e_i, \qquad f\in\cH.
\end{equation*}

\begin{definition}\label{def:jframe_op}
Given a $J$-frame $\cF=\{f_i\}_{i\in I}$ for $\cH$, the
\emph{$J$-frame operator} $S:\cH\ra\cH$ associated with $\cF$ is defined by
\begin{equation*}
  Sf \defeq TT^+f=\sum_{i\in I_+} \K{f}{f_i}f_i- \sum_{i\in I_-} \K{f}{f_i}f_i, \qquad f\in\cH.
\end{equation*}
\end{definition}

It is easy to see that $S$ is a bounded and boundedly invertible
self-adjoint operator $S$ in the Krein space $\mathcal H$.
We also introduce the positive operators $S_\pm$ by
\begin{equation}\label{eses}
  S_\pm f \defeq \sum_{i\in I_\pm} \K{f}{f_i}f_i, \qquad f\in\cH,
\end{equation}
which are non-negative operators in $\cH$ since
\begin{equation}\label{eses positivos}
  \K{S_\pm f}{f} = \sum_{i\in I_\pm}|\K{f}{f_i}|^2 \qquad \text{for every $f\in \cH$}.
\end{equation}
Clearly, the $J$-frame operator can be written as $S=S_+-S_-$; hence it is
the difference of two positive operators.
The operators $S$ and $S_\pm$ are related via the
projection $Q \defeq \proj{\cM_+}{\cM_-}$ and its
adjoint $Q^+=\proj{\cM_-^{\ort}}{\cM_+^{\ort}}$ as follows:
\begin{equation}\label{eses con q}
  QS = S_+=SQ^+  \qquad \text{and} \qquad (I-Q)S=-S_-=S(I-Q)^+.
\end{equation}
Therefore, we have that $R(S_\pm)=\cM_\pm$ and
\begin{equation}\label{mapping}
  S(\cM_-^{\ort})=\cM_+, \qquad  S(\cM_+^{\ort})=\cM_-.
\end{equation}

Finally, the class of $J$-frame operators can be characterized in the following way;
see \cite[Proposition~5.7]{GMMM12}.

\begin{theorem}\label{oldie}
A bounded and boundedly invertible
self-adjoint operator $S$ in a Krein space $\mathcal H$ is a $J$-frame
operator if and only if the following conditions are satisfied:
\begin{myenum}
\item there exists a maximal uniformly positive subspace
 $\mathcal L_+$ of $\mathcal H$ such that $S(\mathcal L_+)$ is also
 maximal uniformly positive;
\item  $[ Sf , f ] \geq 0$ for every $f\in \mathcal L_+$;
\item  $[ Sg, g ] \leq 0$ for every $g \in (S(\mathcal L_+))^{[\perp]}$.
\end{myenum}
\end{theorem}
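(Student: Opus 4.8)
The plan is to prove the two implications separately, using the operators $S_\pm$, the projection $Q=\proj{\cM_+}{\cM_-}$ and the identities \eqref{eses positivos}, \eqref{eses con q}, \eqref{mapping} as the main tools.

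For the necessity, assume $S=TT^+$ is the $J$-frame operator of a $J$-frame $\cF$ and put $\cM_\pm=R(T_\pm)$. I would take $\mathcal L_+\defeq\cM_-^{\ort}$; since $\cM_-$ is maximal uniformly negative, $\mathcal L_+$ is maximal uniformly positive, and $S(\mathcal L_+)=S(\cM_-^{\ort})=\cM_+$ by \eqref{mapping} is maximal uniformly positive as well, which is (i). For (ii), take $f\in\mathcal L_+=\cM_-^{\ort}$: then $Sf\in\cM_+=N(I-Q)$, so \eqref{eses con q} gives $S_-f=-(I-Q)Sf=0$, whence $[Sf,f]=[S_+f,f]=\sum_{i\in I_+}|\K{f}{f_i}|^2\ge 0$ by \eqref{eses positivos}. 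For (iii), take $g\in(S(\mathcal L_+))^{\ort}=\cM_+^{\ort}$: then $Sg\in\cM_-=N(Q)$ by \eqref{mapping}, so $S_+g=QSg=0$, whence $[Sg,g]=-[S_-g,g]=-\sum_{i\in I_-}|\K{g}{f_i}|^2\le 0$.

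For the sufficiency, assume (i)--(iii) hold with some maximal uniformly positive $\mathcal L_+$. I would set $\cM_+\defeq S(\mathcal L_+)$, which is maximal uniformly positive by (i), and $\cM_-\defeq S(\cM_+^{\ort})$. Since $S$ is boundedly invertible and self-adjoint, one has $\cM_+^{\ort}=(S(\mathcal L_+))^{\ort}=S^{-1}(\mathcal L_+^{\ort})$, so in fact $\cM_-=\mathcal L_+^{\ort}$ is maximal uniformly negative, hence $\cH=\cM_+\dotplus\cM_-$ by \cite[Corollary~1.5.2]{AI89}; moreover $\cM_-^{\ort}=\mathcal L_+$, so $Q^+=\proj{\mathcal L_+}{\cM_+^{\ort}}$. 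Now define $S_+\defeq QSQ^+$ and $S_-\defeq-(I-Q)S(I-Q)^+$. Both are self-adjoint in $\cH$ by construction, and they are non-negative: $[S_+f,f]=[SQ^+f,Q^+f]\ge 0$ since $Q^+f\in\mathcal L_+$, by (ii); and $[S_-f,f]=-[S(I-Q)^+f,(I-Q)^+f]\ge 0$ since $(I-Q)^+f\in\cM_+^{\ort}$, by (iii). Furthermore $S=S_+-S_-$: expanding $S=(Q+(I-Q))S(Q^++(I-Q)^+)$, the two cross terms vanish because $R(S(I-Q)^+)\subseteq S(\cM_+^{\ort})=\cM_-=N(Q)$ and $R(SQ^+)\subseteq S(\mathcal L_+)=\cM_+=N(I-Q)$.

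To finish, I would realise $S$ as a $J$-frame operator. One checks $R(S_+)=\cM_+$ and $N(S_+)=\cM_+^{\ort}$, so $A\defeq S_+|_{\cM_+}$ is a bounded bijection of $\cM_+$ onto itself, hence boundedly invertible, and it is self-adjoint and non-negative on the Hilbert space $(\cM_+,\Skindef)$; therefore $A$ is uniformly positive and (writing $A=A^{1/2}A^{1/2}$ and using an orthonormal basis of $\cM_+$) there is a frame $\cF_+=\{f_i\}_{i\in I_+}$ for $(\cM_+,\Skindef)$ with frame operator $A$. Decomposing $f\in\cH$ along $\cH=\cM_+\dotplus\cM_+^{\ort}$ then gives $\sum_{i\in I_+}\K{f}{f_i}f_i=S_+f$ for all $f\in\cH$; an analogous construction on $(\cM_-,-\Skindef)$ from $-S_-|_{\cM_-}$ produces $\cF_-=\{f_i\}_{i\in I_-}$ with $\sum_{i\in I_-}\K{f}{f_i}f_i=S_-f$. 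The family $\cF\defeq\cF_+\cup\cF_-$ is a Bessel family in $\cH$ whose partition into non-negative and negative vectors is exactly $I_\pm$, its synthesis operator $T$ satisfies $R(TP_\pm)=\cM_\pm$, which are maximal uniformly positive and maximal uniformly negative, so $\cF$ is a $J$-frame, and its $J$-frame operator is $TT^+=S_+-S_-=S$.

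The step I expect to be the main obstacle is the sufficiency direction: one has to guess the correct self-adjoint compressions $S_+=QSQ^+$ and $S_-=-(I-Q)S(I-Q)^+$, check that they recombine to $S$ via the vanishing of the off-diagonal terms, and then upgrade the mere quadratic-form positivity $[S_+f,f]\ge 0$ (together with bounded invertibility of $S$) to genuine uniform positivity of $S_+|_{\cM_+}$ on $(\cM_+,\Skindef)$, which is what allows $\cF_+$ to be built. The subtlety that $N(S_+)=\cM_+^{\ort}$ rather than $\cM_-$ has to be tracked carefully throughout.
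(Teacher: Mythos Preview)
The paper does not prove Theorem~\ref{oldie}; it is quoted from \cite[Proposition~5.7]{GMMM12}. The only related argument actually contained in the paper is Remark~\ref{bife}, which establishes precisely the necessity direction, and your argument for that half is essentially identical to it (taking $\mathcal L_+=\cM_-^{\ort}$ and invoking \eqref{mapping} together with \eqref{eses positivos}).

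Your sufficiency argument is therefore your own, and it is correct. The key identification $\cM_+^{\ort}=S^{-1}(\mathcal L_+^{\ort})$ and hence $\cM_-=\mathcal L_+^{\ort}$ is right, the vanishing of the cross terms in $S=(Q+(I-Q))S(Q^++(I-Q)^+)$ follows exactly as you say from \eqref{mapping}-type identities, and the passage from non-negativity plus bounded invertibility of $S_+|_{\cM_+}$ to uniform positivity on the Hilbert space $(\cM_+,\Skindef)$ is standard. Two small points you should make explicit when writing it out in full: first, to guarantee that the partition of $\cF=\cF_+\cup\cF_-$ into non-negative and negative vectors agrees with $I_+\cup I_-$, you need the vectors in $\cF_-$ to be non-zero, which does hold if you build them as $f_i=B^{1/2}e_i$ with $B=-S_-|_{\cM_-}$ uniformly positive; second, the identity $\sum_{i\in I_-}\K{f}{f_i}f_i=S_-f$ comes from the frame identity on the Hilbert space $(\cM_-,-\Skindef)$, so one has to track the sign when translating back to $\Skindef$ --- it works, but it deserves one line.
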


\begin{remark}\label{bife}
If $\cF=\{f_i\}_{i\in I}$ is a $J$-frame for $\cH$ with $J$-frame operator $S$
and $\cM_\pm$ are given by \eqref{emes}, then $\cM_-^{\ort}$ is a
maximal uniformly positive subspace satisfying conditions (i)--(iii) in Theorem \ref{oldie}.
In fact, (i) follows from \eqref{mapping}.
Moreover, since $N(S_-)=R(S_-)^{\ort}=\cM_-^{\ort}$,
we have for $f\in \cM_-^{\ort}$
\[
  \K{Sf}{f} = \K{S_+f}{f} = \sum_{i\in I_+}|\K{f}{f_i}|^2 \geq 0;
\]
see \eqref{eses positivos}.
Analogously, $S\big(\cM_-^{\ort}\big)^{\ort}=\cM_+^{\ort}=N(S_+)$ and
if $g\in \cM_+^{\ort}$ then
\[
  \K{Sg}{g} = \K{-S_-g}{g} = -\sum_{i\in I_-}|\K{g}{f_i}|^2 \leq 0.
\]
Thus, we have also shown conditions (ii) and (iii).
\end{remark}

If $S$ is a $J$-frame operator, then $S^{-1}$ is also a $J$-frame operator.
More precisely, the following proposition is true; see \cite[Proposition~5.4]{GMMM12}.

\begin{proposition}\label{canonical dual}
If $\cF=\{f_i\}_{i\in I}$ is a $J$-frame for $\cH$ with $J$-frame operator $S$,
then $\cF'=\{S^{-1}f_i\}_{i\in I}$ is also a $J$-frame for $\cH$. Furthermore,
\[
  \sgn(\K{S^{-1}f_i}{S^{-1}f_i}) = \sgn(\K{f_i}{f_i}) \qquad \text{for every $i\in I$},
\]
the $J$-frame operator of $\cF'$ is $S^{-1}$ and, if $\cM_\pm$ are given by \eqref{emes},
then
\begin{equation*}
  \ol{\linspan\{S^{-1}f_i:\ i\in I_\pm \}} = \cM_\mp^{\ort}.
\end{equation*}
\end{proposition}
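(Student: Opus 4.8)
The plan is to transport the known structure of $\cF$ through the bounded bijection $S^{-1}$, using that $S^{-1}$ is again self-adjoint in $(\cH,\Skindef)$. Write $T\colon\ell_2(I)\to\cH$ for the synthesis operator of $\cF$, so that $S=TT^+$. The synthesis operator of $\cF'=\{S^{-1}f_i\}_{i\in I}$ is $T'\defeq S^{-1}T$, which is bounded; hence $\cF'$ is a Bessel family and the objects $T'_\pm$, $\Skindef_2$, $(T')^+$ are all available for $\cF'$.

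First I would prove the sign statement, which is equivalent to saying that $\cF'$ induces the same partition $I=I_+\cup I_-$ as $\cF$. From \eqref{mapping} and the bijectivity of $S$ one gets $S^{-1}(\cM_+)=\cM_-^{\ort}$ and $S^{-1}(\cM_-)=\cM_+^{\ort}$, and $\cM_-^{\ort}$ is maximal uniformly positive while $\cM_+^{\ort}$ is maximal uniformly negative, being orthogonal companions of maximal uniformly definite subspaces (this was already used in Remark~\ref{bife}; cf.\ \cite[Corollary~1.5.2]{AI89}). For $i\in I_+$ we have $f_i\in\cM_+$, hence $S^{-1}f_i\in\cM_-^{\ort}$; uniform positivity gives $\K{S^{-1}f_i}{S^{-1}f_i}\ge\alpha\,\|S^{-1}f_i\|^2$ for some $\alpha>0$, and since $S^{-1}$ is injective one has $S^{-1}f_i=0$ exactly when $f_i=0$, so $\sgn\K{S^{-1}f_i}{S^{-1}f_i}=\sgn\K{f_i}{f_i}$. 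The case $i\in I_-$ is symmetric (here $f_i\neq0$ automatically). Thus the index sets attached to $\cF'$ are precisely $I_\pm$, and the corresponding restricted synthesis operators are $T'_\pm=S^{-1}T_\pm$.

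Next, since $\cF$ is a $J$-frame, $R(T_\pm)=\cM_\pm$, so $R(T'_\pm)=S^{-1}(\cM_\pm)$, that is $R(T'_+)=\cM_-^{\ort}$ and $R(T'_-)=\cM_+^{\ort}$; by the previous step these are maximal uniformly positive and maximal uniformly negative, respectively, so $\cF'$ is a $J$-frame. Applying the identity $R(T_\pm)=\cM_\pm$ (valid for every $J$-frame) to $\cF'$ then gives $\ol{\linspan\{S^{-1}f_i:\,i\in I_\pm\}}=R(T'_\pm)=\cM_\mp^{\ort}$, which is the last assertion. Finally, because the indefinite inner product on $\ell_2(I)$ is determined by the (unchanged) partition, the Krein-space adjoint of $T'=S^{-1}T$ is $(T')^+=T^+(S^{-1})^+=T^+S^{-1}$, so the $J$-frame operator of $\cF'$ is $T'(T')^+=S^{-1}(TT^+)S^{-1}=S^{-1}SS^{-1}=S^{-1}$.

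I do not expect a genuine obstacle here: once the mapping relations \eqref{mapping} are in hand, the argument is essentially mechanical. The one point that needs a little care is the first step --- verifying that $\cF'$ produces the same sign pattern, with neutral or zero frame vectors correctly accounted for --- so that $I_\pm$, the restricted synthesis operators, and the Krein-space structure of $\ell_2(I)$ are literally the same objects for $\cF$ and for $\cF'$, which is what legitimizes the subsequent identifications.
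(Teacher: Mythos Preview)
Your proof is correct. The paper does not give its own proof of this proposition; it simply quotes the result from \cite[Proposition~5.4]{GMMM12}, so there is no argument in the present paper to compare against. Your approach---transporting $\cM_\pm$ through $S^{-1}$ via \eqref{mapping}, checking that the sign partition $I=I_+\cup I_-$ is preserved (including the case of zero frame vectors), and then reading off $R(T'_\pm)=S^{-1}(\cM_\pm)=\cM_\mp^{\ort}$ and $T'(T')^+=S^{-1}$---is the natural one and is essentially how the original reference proceeds as well.
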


\section{Operator matrix representations for $J$-frame operators}\label{sec:matrix}

\noindent
In the following we describe $J$-frame operators via $2\times 2$ block operator matrices.
Let $(\cH,\Skindef)$ be a Krein space with fundamental decomposition
\begin{equation*}
  \cH = \cH_+ \,[\Pluspunkt]\, \cH_-.
\end{equation*}
Every bounded operator $S$ in $\cH$ can be written as a block operator matrix
\begin{equation}\label{bom}
  S = \matriz{A}{B}{C}{D}
\end{equation}
with bounded operators
\[
  A \in L(\cH_+), \quad B \in L(\cH_-,\cH_+), \quad
  C \in L(\cH_+,\cH_-), \quad D \in L(\cH_-).
\]
An operator of the form \eqref{bom} is self-adjoint in the Krein space $\cH$
if and only if $A$ and $D$ are self-adjoint in the Hilbert spaces $(\cH_+,\Skindef)$
and $(\cH_-.-\Skindef)$, respectively, and $C=-B^*$.

\begin{theorem}\label{Irland}
Let $S$ be a bounded self-adjoint operator in a Krein space
$(\mathcal H, \Skindef)$. Then, $S$ is a $J$-frame operator if and only if
there exists a fundamental decomposition
\begin{equation}\label{decomp17}
  \mathcal H = \mathcal H_+ [\Pluspunkt] \mathcal H_-,
\end{equation}
such that $S$ admits a representation with respect to \eqref{decomp17}
of the form
\begin{equation}\label{Cork}
  S= \matriz{A}{-AK}{K^*A}{D}
\end{equation}
where $A$ is a uniformly positive operator in the Hilbert space
$(\mathcal H_+, \Skindef)$, $K: \mathcal H_-\to \mathcal H_+$ is a
uniform contraction (i.e.\ $\|K\|<1$), and $D$ is a self-adjoint operator
such that $D+K^*AK$ is uniformly positive in the
Hilbert space $(\mathcal H_-, -\Skindef)$.
\end{theorem}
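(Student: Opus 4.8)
The plan is to prove the two implications separately, using Theorem \ref{oldie} and Remark \ref{bife} as the bridge between the abstract characterization of $J$-frame operators and the concrete block form.

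For the ``only if'' direction, suppose $S$ is a $J$-frame operator. By Remark \ref{bife} there is a maximal uniformly positive subspace, namely $\cM_-^{\ort}$ in the notation there, satisfying conditions (i)--(iii) of Theorem \ref{oldie}; call it $\cL_+$. The first step is to build a fundamental decomposition adapted to $\cL_+$: since $\cL_+$ is maximal uniformly positive, it is the ``angular subspace'' of a fundamental decomposition, i.e.\ there is a fundamental decomposition $\cH=\cH_+[\Pluspunkt]\cH_-$ (with fundamental symmetry $J_0$) and a strict contraction $K_0\colon\cH_-\to\cH_+$ such that $\cL_+=\{K_0 x_- + x_- : x_-\in\cH_-\}$; equivalently, after a further orthogonal rotation within $\cH_\pm$ one may even assume $\cL_+=\cH_+$, but it is cleaner here to keep $\cL_+$ in angular form and choose $\cH_\pm$ so that $\cH_+=S(\cM_-^{\ort})$-adapted. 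Concretely: take $\cH_+:=S(\cL_+)$, which by (i) is maximal uniformly positive, and let $\cH_-:=\cH_+^{\ort}=(S(\cL_+))^{\ort}$; this is a fundamental decomposition because $S(\cL_+)$ is maximal uniformly positive hence regular. Write $S=\begin{smallbmatrix}A & B\\ C & D\end{smallbmatrix}$ with respect to it. Self-adjointness of $S$ in the Krein space forces $C=-B^*$ and $A=A^*$, $D=D^*$ in the respective Hilbert spaces. Condition (iii) of Theorem \ref{oldie} says $[Sg,g]\le 0$ for all $g\in(S(\cL_+))^{\ort}=\cH_-$; writing $g=\binom{0}{x_-}$ this gives $[D x_-,x_-]\le 0$, i.e.\ $-D\ge 0$ in $(\cH_-,-\Skindef)$. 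Next I use that $S$ maps $\cL_+$ onto $\cH_+$: since $S(\cL_+)=\cH_+$ and $\cL_+$ has angular operator (relative to $\cH_\pm$) equal to some contraction, the statement $S(\cL_+)\subseteq\cH_+$ translates into an algebraic identity among $A,B,C,D$ and the angular operator; I expect this to pin down $B=-AK$ for a suitable $K$ and to show $\|K\|<1$. Then condition (ii), $[Sf,f]\ge 0$ for $f\in\cL_+$, combined with bounded invertibility of $S$, yields the uniform positivity of $A$ and, after completing the square in the quadratic form of $S$ restricted appropriately, the uniform positivity of $D+K^*AK$ in $(\cH_-,-\Skindef)$. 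The Schur-complement identity
\[
  \matriz{A}{-AK}{K^*A}{D}
  = \matriz{I}{0}{K^*}{I}\matriz{A}{0}{0}{D+K^*AK}\matriz{I}{-K}{0}{I}
\]
(interpreted in the Krein space, where $\begin{smallbmatrix} I&0\\ K^*&I\end{smallbmatrix}^{+}=\begin{smallbmatrix} I&-K\\ 0&I\end{smallbmatrix}$) is the computational backbone: it makes the positivity bookkeeping transparent and will reappear in the converse.

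For the ``if'' direction, assume $S$ has the form \eqref{Cork} with $A$, $K$, $D$ as stated. I verify the three conditions of Theorem \ref{oldie} with the candidate $\cL_+:=\cH_+$. First, $\cH_+$ is maximal uniformly positive by construction. Second, $S(\cH_+)$: applying $S$ to $\binom{x_+}{0}$ gives $\binom{A x_+}{K^*A x_+}$, so $S(\cH_+)=\{y_+ + K^*y_+ : y_+\in\cH_+\}$ (using that $A$ is boundedly invertible), which is the angular subspace of $\cH_+$ with angular operator $K^*$; since $\|K^*\|=\|K\|<1$, this subspace is maximal uniformly positive, giving (i). For (ii), $[S\binom{x_+}{0},\binom{x_+}{0}]=[A x_+,x_+]\ge 0$ since $A$ is uniformly positive in $(\cH_+,\Skindef)$. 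For (iii) one computes $(S(\cH_+))^{\ort}$: a vector $\binom{u}{v}$ is $\Skindef$-orthogonal to every $y_++K^*y_+$ iff $[u,y_+]-[K^*y_+,v]=0$ for all $y_+$, i.e.\ $u=Kv$, so $(S(\cH_+))^{\ort}=\{\binom{Kv}{v}:v\in\cH_-\}$. Then a direct computation using \eqref{Cork} gives $[S\binom{Kv}{v},\binom{Kv}{v}] = -\,[(D+K^*AK)v,v]$ (the Schur factorization above makes this immediate), which is $\le 0$ because $D+K^*AK$ is uniformly positive in $(\cH_-,-\Skindef)$, i.e.\ $[(D+K^*AK)v,v]\le 0$ under the opposite sign convention — more precisely $-[(D+K^*AK)v,v]\ge\alpha\|v\|^2$ is what uniform positivity in $(\cH_-,-\Skindef)$ means, so $[S\binom{Kv}{v},\binom{Kv}{v}]\le 0$. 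Thus all hypotheses of Theorem \ref{oldie} hold and $S$ is a $J$-frame operator. One also has to note that $S$ given by \eqref{Cork} is automatically bounded and boundedly invertible: boundedness is clear, and bounded invertibility follows from the factorization, since the two triangular factors are boundedly invertible and the middle factor $\diag(A, D+K^*AK)$ is boundedly invertible because $A$ is uniformly positive and $D+K^*AK$ is uniformly positive in $(\cH_-,-\Skindef)$ hence boundedly invertible.

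The main obstacle I anticipate is the ``only if'' direction, specifically extracting the precise off-diagonal shape $B=-AK$ with $\|K\|<1$ rather than merely ``some bounded $B$''. The point is that condition (i) of Theorem \ref{oldie} is the statement that $S$ maps one maximal uniformly positive subspace onto another, and one must translate ``$S(\cL_+)$ is the angular subspace with some strict contraction as angular operator'' into an equation that, read off block-by-block, forces $B=-AK$ and simultaneously controls $\|K\|$. Choosing the fundamental decomposition cleverly — taking $\cH_+=S(\cL_+)$ and arranging $\cL_+$ itself in angular form — is what makes this work: with that choice the angular operator of $S(\cL_+)=\cH_+$ is zero, so the constraint becomes an identity whose $(1,1)$ and $(2,1)$ entries directly express $B$ in terms of $A$ and the angular operator of $\cL_+$, which I then name $K$; uniform positivity of $A$ from (ii) plus bounded invertibility is what lets one solve for $K$ and bound $\|K\|<1$ (indeed $\cL_+$ uniformly positive forces its own angular operator to be a strict contraction). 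Everything else — self-adjointness constraints, the Schur factorization, and the converse — is routine linear algebra in the Krein-space block calculus.
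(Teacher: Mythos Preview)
Your converse (``if'') direction is correct and essentially identical to the paper's: you take $\cL_+=\cH_+$, verify conditions (i)--(iii) of Theorem~\ref{oldie}, and use the Schur factorization for bounded invertibility.

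The ``only if'' direction, however, has a genuine gap: your choice of fundamental decomposition is the wrong one for the form~\eqref{Cork}. You set $\cH_+:=S(\cL_+)$ and $\cH_-:=(S(\cL_+))^{\ort}$. With this choice, condition~(iii) of Theorem~\ref{oldie} applies directly to $\cH_-$ and yields $[Dx_-,x_-]\le0$, i.e.\ $D$ (not $A$) is the non-negative diagonal block in $(\cH_-,-\Skindef)$. Conversely, condition~(ii) concerns $f\in\cL_+$, which in your decomposition is \emph{not} $\cH_+$ but a graph subspace $\{x_++Mx_+:x_+\in\cH_+\}$ for some strict contraction $M$; hence $[Sf,f]\ge0$ does not reduce to $[Ax_+,x_+]\ge0$ and does not give uniform positivity of $A$. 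Similarly, the constraint $S(\cL_+)=\cH_+$, read block-wise, forces $-B^*+DM=0$, i.e.\ $B=M^*D$, which is the off-diagonal shape $B=LD'$ of Theorem~\ref{Scotland}, not $B=-AK$ as in~\eqref{Cork}. In short, your decomposition produces the representation of Theorem~\ref{Scotland}; the compression of $S$ to $\cM_+$ is the operator called $A'$ there, and $A'$ need not be uniformly positive (only $A'+LD'L^*$ is).

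The fix is precisely the alternative you mention and then discard: take $\cH_+:=\cL_+=\cM_-^{\ort}$ and $\cH_-:=\cM_-$. Then condition~(ii) applies directly to $\binom{x_+}{0}$ and yields $A\ge0$; the mapping property $S(\cH_+)=\cM_+=\{y_++K^*y_+\}$ gives $-B^*=K^*A$, hence $B=-AK$; and condition~(iii), applied to $\cM_+^{\ort}=\{Kx_-+x_-\}$, together with the surjectivity $S(\cM_+^{\ort})=\cM_-$, gives uniform positivity of $D+K^*AK$. This is exactly the route the paper takes.
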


\begin{proof}
First, assume that $S$ is a $J$-frame operator and consider
a $J$-frame $\cF=\{f_i\}_{i\in I}$ for $\cH$ with synthesis
operator $T:\ell_2(I)\ra\cH$ such that $S=TT^+$. Let $\cM_\pm$
be as in \eqref{emes}. By \cite[Corollary~1.8.14]{AI89},
the subspace $\cM_-^{\ort}$ is maximal uniformly positive and the
pair $\cM_-^{\ort}$, $\cM_-$ form a fundamental decomposition of $\cH$:
\begin{equation}\label{decomp}
  \mathcal H = \cM_-^{\ort} \,[\Pluspunkt]\, \cM_- .
\end{equation}

Since $S$ is a bounded self-adjoint operator in the
Krein space $(\mathcal H, \Skindef)$, there exist bounded self-adjoint
operators $A$ and $D$ in the Hilbert spaces $(\cM_-^{\ort}, \Skindef)$
and $(\cM_-, -\Skindef)$, respectively, and $B:\cM_- \to \cM_-^{\ort}$ such that
\[
  S = \matriz{A}{B}{-B^*}{D}
\]
with respect to the fundamental decomposition \eqref{decomp}.

We represent the maximal uniformly definite subspaces $\cM_+^{\ort}$ and $\cM_+$
with the help of an angular operator with respect to the fundamental decomposition
\eqref{decomp}, i.e.\ an operator $K:\cM_-\ra \mathcal \cM_-^{\ort}$
with $\|K\|<1$ such that
\begin{equation}\label{Fhafen}
  \cM_+^{\ort} = \left\{ Kx_- + x_- \,\big|\, x_- \in \cM_-\right\}
  \qquad\text{and}\qquad
  \mathcal M_+
  =\{x_+ + K^* x_+ \,|\, x_+ \in  \cM_-^{\ort}\};
\end{equation}
see, e.g.\ \cite[Theorem~1.8.11]{AI89}.

Since $S$ maps $\cM_-^{\ort}$ onto $\mathcal M_+$ (see \eqref{mapping}),
for every $x\in \cM_-^{\ort}$ there exists $x_+ \in \cM_-^{\ort}$ such that
\[
  \matriz{A}{B}{-B^*}{D}
  \vect{x}{0}
  = \vect{Ax}{-B^*x}
  = \vect{x_+}{K^*x_+}
\]
(see \eqref{Fhafen}), i.e.\ $Ax=x_+$ and $-B^*x= K^*Ax$.
Since this holds for all $x\in \cM_-^{\ort}$, we obtain
\[
  B^* = -K^*A \qquad \text{and hence} \qquad B = -AK,
\]
which proves \eqref{Cork}.

The operator $S$ is boundedly invertible (see, e.g.\ \cite[Proposition~5.2]{GMMM12})
and therefore the range of $S$ equals
$\mathcal H= \cM_-^{\ort} [\Pluspunkt] \cM_-$.
From the first row in \eqref{Cork} we conclude that
the range of $A$ equals $\cM_-^{\ort}$.
Since $A$ is self-adjoint, this implies that $A$ is boundedly invertible.

Moreover, by Remark \ref{bife}, the subspace $\cM_-^{\ort}$ satisfies
conditions (i)--(iii) in Theorem~\ref{oldie}.
In particular, condition (ii) in Theorem \ref{oldie} says
that, for $x\in \cM_-^{\ort}$,
\[
  0 \le [Sx,x] = \left[\matriz{A}{-AK}{K^*A}{D}
  \vect{x}{0}, \vect{x}{0}\right] = [Ax,x],
\]
which implies that $A$ is uniformly positive.

Next, let $x_- \in \cM_-$.
Then $Kx_- + x_- \in \mathcal \cM_+^{\ort}$ and
\begin{equation*}
  \matriz{A}{-AK}{K^*A}{D} \vect{Kx_-}{x_-}
  = \vect{0}{(K^*AK+D)x_-}.
\end{equation*}
Since $S(\cM_+^{\ort}) = \cM_-$ (see \eqref{mapping}),
this shows that $K^*AK+D$ is a surjective operator
and hence boundedly invertible (as it is self-adjoint).
Moreover, condition (iii) in Theorem \ref{oldie} states for $x_-\in \cM_-$,
\[
  0 \geq \left[\matriz{A}{-AK}{K^*A}{D} \vect{Kx_-}{x_-}, \vect{Kx_-}{x_-}\right]
  = \bigl[(K^*AK+D)x_-,x_-\bigr],
\]
which proves that $D+K^*AK$ is uniformly positive in $(\cM_-,-\Skindef)$.

\medskip

Conversely, assume that $S$ admits a block operator matrix representation
as in \eqref{Cork}, with respect to the fundamental decomposition \eqref{decomp17}.
Note that $S$ is a boundedly invertible operator because
\[
  S = \matriz{A}{-AK}{K^*A}{D}
  = \matriz{I}{0}{K^*}{I}\matriz{A}{0}{0}{D+K^*AK}\matriz{I}{-K}{0}{I},
\]
and the three block operator matrices on the right-hand side are boundedly invertible.
The space $\mathcal H_+$ is uniformly positive. For $x_+ \in \mathcal H_+$
we have
\[
  \matriz{A}{-AK}{K^*A}{D} \vect{x_+}{0}
  = \vect{Ax_+}{K^*Ax_+}.
\]
Obviously, $S$ maps $\mathcal H_+$ into the uniformly positive
subspace $\{x_+ + K^* x_+ \,|\, x_+ \in  \mathcal H_+ \}$.
Moreover, for $x_+ \in \mathcal H_+$ it follows that
\[
  \left[\matriz{A}{-AK}{K^*A}{D} \vect{x_+}{0}, \vect{x_+}{0}\right]
  = [Ax_+,x_+] \ge 0
\]
since $A$ is positive.  In a similar manner,
using the positivity of $K^*AK +D$ we obtain for
\[
  x\in
  \{x_+ + K^* x_+ \,|\, x_+ \in  \mathcal H_+ \}^{[\perp]}
  = \{Kx_- +  x_- \,|\, x_- \in  \mathcal H_- \}
\]
the estimate
\[
  \left[\matriz{A}{-AK}{K^*A}{D} \vect{Kx_-}{x_-}, \vect{Kx_-}{x_-}\right]
  = [(K^*AK +D)x_-,x_-] \leq 0.
\]
Therefore, by Theorem~\ref{oldie} the operator $S$
is a $J$-frame operator.
\end{proof}

Given a $J$-frame $\cF$ for $\cH$, consider $\cM_\pm$ as in \eqref{emes}.
The block operator representation in Theorem~\ref{Irland} is based
on the fundamental decomposition \eqref{decomp} determined by $\cM_-$.
Obviously, the subspace $\cM_+$ leads in the same way to a
fundamental decomposition:
\begin{equation}\label{decomp2}
  \cH = \cM_+ [\dotplus] \cM_+^{\ort}.
\end{equation}
Therefore, besides \eqref{Cork}, there exists in a natural way a second
block operator representation with respect to \eqref{decomp2}.
Both representations are used in the next section to relate the $J$-frame bounds
with the spectrum of the $J$-frame operator.

\begin{theorem}\label{Scotland}
Let $S$ be a bounded self-adjoint operator in a Krein space
$(\mathcal H, \Skindef)$. Then, $S$ is a $J$-frame operator if and only if
there exists a fundamental decomposition
\begin{equation}\label{decomp18}
  \mathcal H = \mathcal K_+ [\Pluspunkt] \mathcal K_-,
\end{equation}
such that $S$ admits a representation with respect to \eqref{decomp18} of the form
\begin{equation}\label{Edimburg}
  S = \matriz{A'}{LD'}{-D'L^*}{D'}
\end{equation}
where $A'$ is a self-adjoint operator such that $A'+LD'L^*$ is a
uniformly positive operator in the Hilbert space $(\mathcal K_+, \Skindef)$,
$L: \mathcal K_-\to \mathcal K_+$ is a uniform contraction (i.e.\ $\|L\|<1$),
and $D'$ is uniformly positive in the Hilbert space $(\mathcal K_-, -\Skindef)$.
\end{theorem}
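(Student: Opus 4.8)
The plan is to derive Theorem~\ref{Scotland} from Theorem~\ref{Irland} by exploiting the symmetry between $\cM_+$ and $\cM_-$, rather than by repeating the whole argument. First I would observe that the fundamental decomposition \eqref{decomp2}, $\cH=\cM_+\,[\dotplus]\,\cM_+^{\ort}$, is exactly the decomposition one obtains from the roles of ``positive'' and ``negative'' being swapped: if $S$ is a $J$-frame operator with associated subspaces $\cM_\pm$, then $-S$ is a $J$-frame operator whose corresponding positive/negative subspaces are $\cM_-$ and $\cM_+$ (this is immediate from Theorem~\ref{oldie}, since the three conditions there are symmetric under $S\mapsto -S$, $\cL_+\mapsto S(\cL_+)^{\ort}$). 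Alternatively and more concretely: $S$ is a $J$-frame operator if and only if $-S$ considered in the Krein space $(\cH,-\Skindef)$ — equivalently $S$ in $(\cH,-\Skindef)$ with the other sign convention — is a $J$-frame operator. I would pick whichever of these formulations keeps the bookkeeping cleanest; the cleanest is probably to apply Theorem~\ref{Irland} to the operator $-S$ in the Krein space $(\cH,\Skindef)$.

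Concretely, suppose $S$ is a $J$-frame operator. Then $-S$ is self-adjoint in $(\cH,\Skindef)$, and I claim $-S$ is again a $J$-frame operator: using Theorem~\ref{oldie}, take $\cL_+$ to be a maximal uniformly positive subspace on which $-S$ satisfies conditions (i)--(iii). For a $J$-frame operator $S$ with subspaces $\cM_\pm$, Remark~\ref{bife} shows $\cM_-^{\ort}$ works for $S$; by the same reasoning applied with the signs reversed, $\cM_+^{\ort}$ works for $-S$, because $(-S)(\cM_+^{\ort})=\cM_+$ is maximal uniformly positive, $[(-S)f,f]=[S_-f,f]\ge 0$ for $f\in\cM_+^{\ort}=N(S_+)$, and $[(-S)g,g]=-[S_+g,g]\le 0$ for $g\in\cM_-^{\ort}=N(S_-)=((-S)(\cM_+^{\ort}))^{\ort}$. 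Hence $-S$ is a $J$-frame operator with its ``$\cM_+$'' equal to $\cM_-$ and its ``$\cM_-$'' equal to $\cM_+$. Now apply Theorem~\ref{Irland} to $-S$: there is a fundamental decomposition $\cH=\cM_+\,[\dotplus]\,\cM_+^{\ort}$ (this is the ``$\cM_-^{\ort}\,[\dotplus]\,\cM_-$'' decomposition \emph{for} $-S$, with the two roles swapped, so $\cK_+:=\cM_+$ and $\cK_-:=\cM_+^{\ort}$), with respect to which
\[
  -S = \matriz{\wt A}{-\wt A\wt K}{\wt K^*\wt A}{\wt D},
\]
where $\wt A$ is uniformly positive in $(\cM_+,\Skindef)$ — wait, one must be careful: in Theorem~\ref{Irland} the top-left block lives in the Hilbert space $(\cH_+,\Skindef)$ where $\cH_+$ is uniformly \emph{positive}; here for $-S$ the uniformly positive subspace is $\cM_+$, so indeed the top-left corner sits in $(\cM_+,\Skindef)$ but the Krein-space adjoint structure is that of $(\cH,\Skindef)$ throughout. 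Negating, and relabelling $A':=-\wt A$, $D':=-\wt D$ — here I need $D'$ to be uniformly positive in $(\cK_-,-\Skindef)=(\cM_+^{\ort},-\Skindef)$, which follows since by Theorem~\ref{Irland} applied to $-S$ the operator $\wt D+\wt K^*\wt A\wt K$ is uniformly positive in $(\cM_+^{\ort},-\Skindef)$ — hmm, that gives positivity of $\wt D+\wt K^*\wt A\wt K$, not of $-\wt D=D'$. So the matching is: $S=\matriz{-\wt A}{\wt A\wt K}{-\wt K^*\wt A}{-\wt D}$. Comparing with \eqref{Edimburg} I set $D':=-\wt D$, and then $-D'L^*=-\wt K^*\wt A$ forces $L:=\wt A\wt K(D')^{-1}=-\wt A\wt K\wt D^{-1}$, while $LD'=\wt A\wt K$ is consistent, and $A':=-\wt A$. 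Then $A'+LD'L^*=-\wt A+\wt A\wt K(D')^{-1}\wt D'\,(D')^{-1}D'L^*$... this is getting tangled, which tells me that a direct transcription needs care. The cleaner route, which I would actually write up, is to \textbf{factor and invert}: from $S=\matriz{A}{-AK}{K^*A}{D}$ in Theorem~\ref{Irland} one has the factorisation displayed in that proof,
\[
  S = \matriz{I}{0}{K^*}{I}\matriz{A}{0}{0}{D+K^*AK}\matriz{I}{-K}{0}{I},
\]
but with respect to the \emph{other} fundamental decomposition \eqref{decomp2} one should instead symmetrically factor through the lower-triangular/upper-triangular pair in the opposite order, giving $S=\matriz{I}{L}{0}{I}\matriz{A+LD'L^*}{0}{0}{D'}\matriz{I}{0}{-L^*}{I}$ with $D'$ the Schur complement in the $\cM_+$-corner; matching entries yields $A'=A+LD'L^*-LD'L^*$...

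Let me restate the plan without the false starts. \textbf{Approach.} The forward direction: given that $S$ is a $J$-frame operator, use Remark~\ref{bife} and \eqref{mapping} with the roles of $\cM_+,\cM_-$ interchanged to see that $\cM_+$ is a maximal uniformly positive subspace with $S(\cM_+^{\ort})=\cM_-$ and $S(\cM_+)=\cM_-^{\ort}$... no — $S(\cM_+)\subseteq\cM_+$? From \eqref{eses con q}, $S_+=R(S_+)=\cM_+$ and $S$ maps $\cM_-^{\ort}$ onto $\cM_+$. I would decompose with respect to \eqref{decomp2}, write $S=\matriz{A'}{B'}{-B'^*}{D'}$, and run the \emph{same three bullet-point computations} as in the proof of Theorem~\ref{Irland} but now using: (a) $S(\cM_+^{\ort})=\cM_-$ is maximal uniformly negative — wait we want the bottom-right corner uniformly positive in $(\cM_+^{\ort},-\Skindef)$, i.e. $S$ restricted there negative-definite in $\Skindef$, which is condition (iii)-type; (b) the angular operator $L:\cM_+^{\ort}\to\cM_+$ describing $\cM_-^{\ort}$ and $\cM_-$; (c) $S(\cM_-^{\ort})=\cM_+$ to pin down the off-diagonal entry in terms of $L$ and $D'$. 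This is literally the proof of Theorem~\ref{Irland} read in a mirror, so in the write-up I would either (i) say ``apply Theorem~\ref{Irland} to $-S$, whose associated subspaces are $\cM_-$ and $\cM_+$, and negate the resulting block representation, relabelling $A:=-D'$-type'' — carefully tracking that uniform positivity of the Schur complement $D+K^*AK$ of $-S$ becomes uniform positivity of $A'+LD'L^*$ of $S$ — or (ii) reprove it in three short displays mirroring the earlier proof. The converse direction is handled by the \emph{same} trick: if $S$ has the form \eqref{Edimburg}, then $-S$ has, with respect to the swapped decomposition $\mathcal K_-\,[\dotplus]\,\mathcal K_+$ (which is a fundamental decomposition of $\cH$ with $\mathcal K_-$ now playing the uniformly-positive role for $-S$... no, $\mathcal K_-$ is uniformly negative; rather $(\cH,-\Skindef)$ has $\mathcal K_-$ uniformly positive), a representation of the form \eqref{Cork}, so Theorem~\ref{Irland} (applied with the opposite sign convention, or to $-S$) gives that $-S$, hence $S$, is a $J$-frame operator. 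Alternatively, just repeat verbatim the short converse computation from the proof of Theorem~\ref{Irland}, checking conditions (i)--(iii) of Theorem~\ref{oldie} for $S$ with $\cL_+:=\{Lx_-+x_-:x_-\in\mathcal K_-\}$, the maximal uniformly positive subspace determined by the angular operator $L$: compute $S\cL_+$, show it is maximal uniformly positive (it should come out to $\mathcal K_+$ up to the factorisation), and verify the two sign conditions using positivity of $A'+LD'L^*$ and of $D'$.

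\textbf{Main obstacle.} The real work is purely bookkeeping: getting the signs and the identification of the new blocks $A',L,D'$ in terms of the old $A,K,D$ (or their Schur complements) exactly right, so that ``$D+K^*AK$ uniformly positive'' converts cleanly into ``$A'+LD'L^*$ uniformly positive'' and ``$A$ uniformly positive'' into ``$D'$ uniformly positive'', with $L$ the angular operator for $\cM_-$ with respect to \eqref{decomp2} (note $L$ will not equal $K$ or $K^*$ in general — it is the angular operator of the \emph{same} subspace $\cM_-$ but written over a different fundamental decomposition). The factorisation $S=\matriz{I}{L}{0}{I}\,\diag(A',D')\,\matriz{I}{0}{-L^*}{I}$, i.e. the ``dual'' LDU-type factorisation through the upper-triangular factor first, is the computational heart and makes bounded invertibility transparent; I expect to spend most of the argument verifying that this matches \eqref{Edimburg} and that the diagonal entries have the claimed definiteness, which is where one must be most careful about which Hilbert-space inner product ($\Skindef$ on $\mathcal K_+$ versus $-\Skindef$ on $\mathcal K_-$) each positivity statement refers to.
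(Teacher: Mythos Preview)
Your primary route—reducing to Theorem~\ref{Irland} applied to $-S$—does not work. The claim that $-S$ is again a $J$-frame operator is false: your verification via Theorem~\ref{oldie} takes $\cL_+=\cM_+^{\ort}$, but $\cM_+^{\ort}$ is maximal uniformly \emph{negative}, so it cannot play the role of $\cL_+$ required there. (In fact, as Theorem~\ref{th:spec_incl} later shows, $\sigma(S)$ lies in the open right half-plane, so $\sigma(-S)$ lies in the left half-plane and $-S$ is never a $J$-frame operator.) The symmetry that actually works is $S$ considered as an operator in the Krein space $(\cH,-\Skindef)$, not $-S$ in $(\cH,\Skindef)$; but carrying that through cleanly is no shorter than the direct argument. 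Your converse sketch has a parallel slip: the subspace $\{Lx_-+x_-:x_-\in\cK_-\}$ is uniformly \emph{negative} (since $\|L\|<1$ and $\cK_-$ is uniformly negative), so it cannot serve as $\cL_+$ in Theorem~\ref{oldie} either.

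The paper's proof is precisely your fallback option~(ii): take the fundamental decomposition $\cH=\cM_+\sdo\cM_+^{\ort}$, let $L:\cM_+^{\ort}\to\cM_+$ be the angular operator of $\cM_-$, and rerun the computations of Theorem~\ref{Irland} with the mapping property $S(\cM_+^{\ort})=\cM_-$ from \eqref{mapping} in place of $S(\cM_-^{\ort})=\cM_+$; this pins down the off-diagonal block as $LD'$ and gives uniform positivity of $D'$ and of $A'+LD'L^*$ from conditions~(iii) and~(ii) of Theorem~\ref{oldie}, respectively. For the converse the correct maximal uniformly positive subspace is $\cL_+=\{y_++L^*y_+:y_+\in\cK_+\}$ (uniformly positive since $\|L^*\|<1$): from \eqref{Edimburg} one computes $S(y_++L^*y_+)=(A'+LD'L^*)y_+\in\cK_+$, so $S(\cL_+)=\cK_+$, and the two sign conditions in Theorem~\ref{oldie} reduce exactly to uniform positivity of $A'+LD'L^*$ in $(\cK_+,\Skindef)$ and of $D'$ in $(\cK_-,-\Skindef)$.
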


\begin{proof}
Assume that $\cF=\{f_i\}_{i\in I}$ is a $J$-frame for $\cH$ with $J$-frame operator $S$.
Consider the maximal uniformly definite subspaces $\cM_\pm$ given
by \eqref{emes}, the fundamental decomposition
\[
  \cH = \cM_+ [\dotplus] \cM_+^{\ort},
\]
and the angular operator $L:\cM_+^{\ort} \ra \cM_+$ associated with $\cM_-$.
Since $S$ maps $\cM_+^{\ort}$ onto $\cM_-$ (see \eqref{mapping}),
the representation \eqref{Edimburg} follows in the same way as the
representation \eqref{Cork} was proved in Theorem \ref{Irland}.
The converse also follows the same ideas used in Theorem~\ref{Irland}.
\end{proof}

As it was mentioned before, if $\cF$ is a $J$-frame for $\cH$,
the subspaces $\cM_\pm$ given in \eqref{emes} are maximal
uniformly definite subspaces with opposite signature and $\cH$
can be decomposed as $\cH=\cM_+ \dotplus \cM_-$; see \eqref{suma}.
Observe that the projection $Q=\proj{\cM_+}{\cM_-}$ has
the following block operator matrix representation
with respect to the fundamental decomposition \eqref{decomp}:
\begin{equation}\label{panadaria}
  Q = \matriz{I}{0}{K^*}{0},
\end{equation}
where $K:\cM_-\ra\cM_-^{\ort}$ is the angular operator associated with $\cM_+^{\ort}$.
On the other hand, with respect to the fundamental decomposition \eqref{decomp2},
$Q$ is represented as
\begin{equation}\label{panadaria2}
  Q = \matriz{I}{-L}{0}{0},
\end{equation}
where $L:\cM_+^{\ort}\ra\cM_+$ is the angular operator associated with $\cM_-$.

\begin{corollary}\label{eses pm}
Let $\cF=\{f_i\}_{i\in I}$ be a $J$-frame for $\cH$ with $J$-frame operator $S$,
let $S_\pm$ be as in \eqref{eses}, and let $\cM_\pm$ be as in \eqref{emes}.
Then the following statements are true.
\begin{myenum}
\item
With respect to the fundamental decomposition \eqref{decomp},
the positive operators $S_+$ and $S_-$ are represented as
\begin{equation}\label{lomo}
  S_+ = \matriz{A}{-AK}{K^*A}{-K^*AK} \qquad \text{and} \qquad
  S_- = \matriz{0}{0}{0}{-(D+K^*AK)}.
\end{equation}
\item
With respect to the fundamental decomposition \eqref{decomp2},
the positive operators $S_+$ and $S_-$ are represented as
\begin{equation}\label{lomito}
  S_+ = \matriz{A'+LD'L^*}{0}{0}{0} \qquad \text{and} \qquad
  S_- = \matriz{LD'L^*}{-LD'}{D'L^*}{-D'}.
\end{equation}
\end{myenum}
\end{corollary}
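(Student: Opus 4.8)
The plan is to read off all four formulas by a short block-matrix computation, using only facts already at hand: the identities $QS=S_+$ and $(I-Q)S=-S_-$ from \eqref{eses con q} together with $S=S_+-S_-$, and the block representations of $Q$ and of $S$ relative to the two fundamental decompositions. No new operator theory is needed; the proof is purely bookkeeping, and the one point that must be \emph{argued} rather than computed is that the fundamental decompositions and angular operators appearing in Theorems~\ref{Irland} and \ref{Scotland} coincide with those used in \eqref{panadaria} and \eqref{panadaria2}. This is clear from the proofs of those theorems, where the decompositions \eqref{decomp} and \eqref{decomp2} are exactly the ones determined by $\cM_-$ and by $\cM_+$, and the angular operators $K$ and $L$ are introduced precisely as in \eqref{panadaria}--\eqref{panadaria2}.

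For part~(i) I would fix a $J$-frame $\cF$ with $J$-frame operator $S$ and work throughout relative to \eqref{decomp}, $\cH=\cM_-^{\ort}\sdo\cM_-$. On this decomposition Theorem~\ref{Irland} gives $S=\matriz{A}{-AK}{K^*A}{D}$, while \eqref{panadaria} gives $Q=\proj{\cM_+}{\cM_-}=\matriz{I}{0}{K^*}{0}$. Multiplying these two $2\times2$ matrices yields $S_+=QS$, which is the first formula in \eqref{lomo}; and since $S_-=S_+-S$, only the lower-right entry changes, giving the second formula in \eqref{lomo}. As a cross-check one may instead compute $-S_-=(I-Q)S$ with $I-Q=\matriz{0}{0}{-K^*}{I}$ and recover the same matrix.

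For part~(ii) the computation is identical with \eqref{decomp2}, $\cH=\cM_+\sdo\cM_+^{\ort}$, in place of \eqref{decomp}: there Theorem~\ref{Scotland} gives $S=\matriz{A'}{LD'}{-D'L^*}{D'}$ and \eqref{panadaria2} gives $Q=\matriz{I}{-L}{0}{0}$. Then $S_+=QS$ collapses the second block column and leaves $\matriz{A'+LD'L^*}{0}{0}{0}$, while $S_-=S_+-S$ yields $\matriz{LD'L^*}{-LD'}{D'L^*}{-D'}$; these are the two formulas in \eqref{lomito}.

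There is thus no real obstacle here; the step on which I expect to spend the most words is the identification of angular operators noted above, and even that only amounts to tracking the constructions in the two preceding theorems. To be safe I would finish by checking the output against \eqref{eses positivos}: for $f=x_+\in\cM_-^{\ort}$ the matrix in \eqref{lomo} gives $\K{S_+f}{f}=\K{Ax_+}{x_+}\ge0$, in accordance with uniform positivity of $A$; for $f=Kx_-+x_-\in\cM_+^{\ort}$ it gives $\K{S_-f}{f}=\K{(D+K^*AK)x_-}{x_-}$ with the sign fixed by $D+K^*AK$ being uniformly positive in $(\cM_-,-\Skindef)$; and adding the matrices back recovers $S=S_+-S_-$ in both decompositions.
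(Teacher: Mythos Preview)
Your proposal is correct and follows essentially the same approach as the paper: use $S_+=QS$ and $S_-=-(I-Q)S$ from \eqref{eses con q} together with the block representations \eqref{panadaria}, \eqref{panadaria2}, \eqref{Cork}, and \eqref{Edimburg}, and read off the results by matrix multiplication. Your additional remarks about identifying the angular operators and your consistency checks are fine but not needed---the paper dispatches the whole corollary in two sentences.
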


\begin{proof}
The positive operators $S_+$ and $S_-$ can be written as $S_+=QS$ and $S_-=-(I-Q)S$,
where $Q=\proj{\cM_+}{\cM_-}$; see \eqref{eses con q}.
Then the block operator matrix representations of $S_+$ and $S_-$
follow easily by multiplying \eqref{panadaria} with \eqref{Cork},
and \eqref{panadaria2} with \eqref{Edimburg}, respectively.
\end{proof}

Finally, if $S$ is a $J$-frame operator, we derive block operator matrix
representations for its inverse $S^{-1}$.

\begin{theorem}\label{Sminus1}
Let $S$ be a bounded and boundedly invertible self-adjoint operator acting on a
Krein space $(\cH,\Skindef)$.  Then, the following conditions are equivalent:
\begin{myenum}
\item
$S$ is a $J$-frame operator;
\item
there exists a fundamental decomposition $\cH=\cH_+ \sdo \cH_-$
such that $S^{-1}$ is represented as
\begin{equation}\label{Sinv}
  S^{-1} = \matriz{A^{-1}- KZK^*}{KZ}{-ZK^*}{Z},
\end{equation}
where $A$ is a uniformly positive operator in the
Hilbert space $(\mathcal H_+, \Skindef)$, $Z$  is a uniformly positive operator
in the Hilbert space $(\mathcal H_-, -\Skindef)$
and $K: \mathcal H_-\to \mathcal H_+$ is a uniform contraction;
\item
there exists a fundamental decomposition $\cH=\cK_+ \sdo \cK_-$
such that $S^{-1}$ is represented as
\begin{equation}\label{Fritzy}
  S^{-1} = \matriz{Y}{-YL}{L^*Y}{(D')^{-1}-L^*YL},
\end{equation}
where $D'$ is a uniformly positive operator in the
Hilbert space $(\cK_-, -\Skindef)$, $Y$ is a uniformly positive operator
in the Hilbert space $(\cK_+, \Skindef)$ and $L: \cK_-\to \cK_+$ is a uniform contraction.
\end{myenum}
\end{theorem}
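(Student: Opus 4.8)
The plan is to derive all three statements from the block operator representations already obtained in Theorems~\ref{Irland} and \ref{Scotland}, exploiting that the triangular (LU-type) factorizations used there to establish bounded invertibility also make the inverse of a $J$-frame operator completely explicit. Concretely, I would prove (i)~$\Leftrightarrow$~(ii) from Theorem~\ref{Irland} and (i)~$\Leftrightarrow$~(iii) from Theorem~\ref{Scotland}; the two arguments are mirror images of each other.

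For (i)~$\Rightarrow$~(ii), Theorem~\ref{Irland} provides a fundamental decomposition $\cH=\cH_+\sdo\cH_-$ with
\[
  S = \matriz{A}{-AK}{K^*A}{D}
    = \matriz{I}{0}{K^*}{I}\matriz{A}{0}{0}{D+K^*AK}\matriz{I}{-K}{0}{I},
\]
where $A$ is uniformly positive in $(\cH_+,\Skindef)$, $\|K\|<1$, and $D+K^*AK$ is uniformly positive in $(\cH_-,-\Skindef)$. Each of the three factors is boundedly invertible (the outer two because they are unipotent and triangular, the middle one by the positivity hypotheses), so I may invert the factorization termwise. Setting $Z\defeq(D+K^*AK)^{-1}$, which is again uniformly positive in $(\cH_-,-\Skindef)$, a short multiplication gives
\[
  S^{-1} = \matriz{I}{K}{0}{I}\matriz{A^{-1}}{0}{0}{Z}\matriz{I}{0}{-K^*}{I}
         = \matriz{A^{-1}-KZK^*}{KZ}{-ZK^*}{Z},
\]
which is exactly \eqref{Sinv}. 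For the converse (ii)~$\Rightarrow$~(i), I would start from \eqref{Sinv}, observe that it factors in the same triangular fashion, and invert back to obtain
\[
  S = \matriz{I}{0}{K^*}{I}\matriz{A}{0}{0}{Z^{-1}}\matriz{I}{-K}{0}{I}
    = \matriz{A}{-AK}{K^*A}{Z^{-1}-K^*AK};
\]
this is of the form \eqref{Cork} with $D\defeq Z^{-1}-K^*AK$, so that $D+K^*AK=Z^{-1}$ is uniformly positive, and Theorem~\ref{Irland} then shows that $S$ is a $J$-frame operator.

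The equivalence (i)~$\Leftrightarrow$~(iii) runs along identical lines with Theorem~\ref{Scotland} replacing Theorem~\ref{Irland}: one uses the factorization
\[
  S = \matriz{A'}{LD'}{-D'L^*}{D'}
    = \matriz{I}{L}{0}{I}\matriz{A'+LD'L^*}{0}{0}{D'}\matriz{I}{0}{-L^*}{I}
\]
and puts $Y\defeq(A'+LD'L^*)^{-1}$, uniformly positive in $(\cK_+,\Skindef)$, to reach \eqref{Fritzy}; the converse reverses this computation and appeals to Theorem~\ref{Scotland}.

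I do not anticipate a genuine obstacle here: the whole proof is bookkeeping with $2\times2$ block matrices. The only delicate points are (a) checking that the fundamental decomposition stays fixed when passing between $S$ and $S^{-1}$, so that \eqref{Cork}/\eqref{Sinv} and \eqref{Edimburg}/\eqref{Fritzy} refer to the same decomposition, and (b) verifying that the uniform positivity of the diagonal blocks is preserved under inversion (that $Z^{-1}-K^*AK$ really plays the role of $D$ with $D+K^*AK$ uniformly positive, and analogously for $Y^{-1}-LD'L^*$). As an alternative I could shortcut (i)~$\Leftrightarrow$~(ii) by recalling from Proposition~\ref{canonical dual} that $S$ is a $J$-frame operator if and only if $S^{-1}$ is, and then applying Theorem~\ref{Irland} directly to $S^{-1}$; but the explicit factorization above additionally records how the entries of $S^{-1}$ are built from those of $S$, which is the extra information the statement is really after.
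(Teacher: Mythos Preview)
Your proposal is correct and matches the paper's own proof almost verbatim: the paper also proves (i)~$\Leftrightarrow$~(ii) by writing the representation \eqref{Cork} in the triangular factorization you display, inverting it with $Z\defeq(D+K^*AK)^{-1}$ to obtain \eqref{Sinv}, and handling (i)~$\Leftrightarrow$~(iii) analogously via \eqref{Edimburg} with $Y\defeq(A'+LD'L^*)^{-1}$. The only cosmetic difference is that the paper phrases each equivalence as a single ``if and only if'' (since the factorization is manifestly reversible), whereas you spell out the two directions separately.
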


\begin{proof}
First, we prove the equivalence (i) $\Leftrightarrow$ (ii).
By Theorem~\ref{Irland}, $S$ is a $J$-frame operator if and only if
there exists a fundamental decomposition $\cH=\cH_+ \sdo \cH_-$
such that $S$ is represented as \eqref{Cork}, where $A$ is a
uniformly positive operator in the Hilbert space $(\mathcal H_+, \Skindef)$,
$D$ is a self-adjoint operator in the Hilbert space $(\mathcal H_-, -\Skindef)$
such that $D+K^*AK$ is uniformly positive, and $K: \mathcal H_-\to \mathcal H_+$
with $\|K\|<1$.

Observe that the representation of $S$ in \eqref{Cork} can be written as
\[
  S = \matriz{I}{0}{K^*}{I} \matriz{A}{0}{0}{D+K^*AK} \matriz{I}{-K}{0}{I}.
\]
Hence, if $Z\defeq (D+K^*AK)^{-1}$, $S$ is a $J$-frame operator if and only if
\begin{align*}
  S^{-1} = \matriz{I}{K}{0}{I} \matriz{A^{-1}}{0}{0}{Z} \matriz{I}{0}{-K^*}{I}
  = \matriz{A^{-1}-KZK^*}{KZ}{-ZK^*}{Z},
\end{align*}
where $A$ and $Z$ are uniformly positive operators and $K$ is a uniform contraction.

The proof of (i) $\Leftrightarrow$ (iii) is similar, where one uses that $S$ can be represented
as in \eqref{Edimburg} in Theorem~\ref{Scotland} and one defines $Y\defeq (A'+LD'L^*)^{-1}$.
\end{proof}

\begin{remark}\label{cuadril}
It follows from Proposition~\ref{canonical dual} that
if $S:\cH\ra\cH$ is a $J$-frame operator, then $S^{-1}$ is also a $J$-frame operator.
Moreover, the representation of $S^{-1}$ in \eqref{Sinv} uses the components
of the representation of $S$ in \eqref{Cork};
note that, according to the proof, $Z=(D+K^*AK)^{-1}$.
However, \eqref{Sinv} is of the form as in \eqref{Edimburg} in Theorem~\ref{Scotland}
because with the notation from that theorem one has that $D'=Z$ is uniformly positive,
$L=K$ is a uniform contraction, $ A'= A^{-1}-KZK^*$ satisfies
\[
  A'+LD'L^* = (A^{-1}-KZK^*)+KZK^* = A^{-1},
\]
and, hence, $A'+LD'L^*$ is uniformly positive.

Similarly, the representation of $S^{-1}$ in \eqref{Fritzy} uses the components
of the representation of $S$ in \eqref{Edimburg} with $Y=(A'+LD'L^*)^{-1}$,
and the representation of $S^{-1}$ is of the form as in \eqref{Cork}.
\end{remark}

\section{Interpreting the $J$-frame bounds as \\ spectral bounds of the $J$-frame operator}
\label{sec:framebounds}

\noindent
Let $\cF=\{f_i\}_{i\in I}$ be a $J$-frame for $\cH$ and let $\cM_\pm$ be
as in \eqref{emes}.
By Theorem~\ref{thm J frame bounds}, $\cF_\pm=\{f_i\}_{i\in I_\pm}$ is a
frame for $(\cM_\pm, \pm\Skindef)$,
i.e.\ there exist $0<\alpha_\pm \leq \beta_\pm$ such that
\[
  \alpha_\pm (\pm\K{f}{f}) \leq \sum_{i\in I_\pm} \big|\K{f}{f_i}\big|^2
  \leq \beta_\pm (\pm\K{f}{f}) \qquad \text{for every $f\in \cM_\pm$},
\]
cf.\ \eqref{eq J frame bounds}.
The frame bounds of $\cF_\pm$ (i.e.\ the optimal set of
constants $0<\alpha_\pm \leq \beta_\pm$) are called the \emph{$J$-frame bounds of $\cF$}.

It is our aim to recover the $J$-frame bounds of $\cF$ as spectral bounds
for the $J$-frame operator $S$.  Moreover, we recover also the $J$-frame bounds
of the dual $J$-frame $\cF'=\{S^{-1}f_i\}_{i\in I}$.
This is achieved in a straightforward approach by using the $2\times 2$
block operator matrix representations for $S$, $S_\pm$ and $S^{-1}$
from Section~\ref{sec:matrix}.

\begin{proposition}\label{pr:Jframebounds}
 Let $0<\alpha_\pm \leq \beta_\pm$ be the $J$-frame bounds of the $J$-frame $\cF$.
 If the $J$-frame operator $S$ is represented as in \eqref{Cork}, then
\[
\alpha_-=\inf\, W(D+K^*AK) \qquad \text{and} \qquad \beta_-=\sup\, W(D+K^*AK).
\]
If the $J$-frame operator $S$ is represented as in \eqref{Edimburg}, then
\[
\alpha_+=\inf\, W(A'+LD'L^*) \qquad \text{and} \qquad \beta_+=\sup\, W(A'+LD'L^*).
\]
\end{proposition}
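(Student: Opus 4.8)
The plan is to identify the operator $D+K^*AK$ appearing in \eqref{Cork} with the frame operator of $\cF_-$ acting on the Hilbert space $(\cM_-,-\Skindef)$, and then to invoke the classical fact (recalled in Subsection~\ref{sec:frames}) that the frame bounds of a frame equal $\|S_-'^{-1}\|^{-1}$ and $\|S_-'\|$, where $S_-'$ is the frame operator; equivalently, they are $\inf W(S_-')$ and $\sup W(S_-')$. Concretely, from Corollary~\ref{eses pm}(i) we have, with respect to the decomposition \eqref{decomp},
\[
  S_- = \matriz{0}{0}{0}{-(D+K^*AK)},
\]
so $S_-$ acts as the zero operator on $\cM_-^{\ort}$ and as $-(D+K^*AK)$ on $\cM_-$. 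Since $R(S_-)=\cM_-$ (by \eqref{mapping}, or directly from the matrix form) and $\cF_-$ is a frame for $(\cM_-,-\Skindef)$, the frame operator of $\cF_-$ is exactly the compression of $S_-$ to $\cM_-$, computed with respect to the inner product $-\Skindef$; that compression is the operator $D+K^*AK$, which Theorem~\ref{Irland} tells us is uniformly positive in $(\cM_-,-\Skindef)$.

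The one point that needs care — and is the only real obstacle — is to verify that the Hilbert-space frame operator of $\cF_-$, as defined abstractly via $S_-'=T_-'(T_-')^*$ using the inner product $-\Skindef$ on $\cM_-$, coincides with $D+K^*AK$ rather than with some similarity transform of it. For this I would use the formula \eqref{eses} for $S_-$ together with \eqref{eses positivos}, which gives $\K{S_-f}{f}=\sum_{i\in I_-}|\K{f}{f_i}|^2$ for all $f\in\cH$; restricting to $f\in\cM_-$ and rewriting $\K{\cdot}{\cdot}$ on $\cM_-$ as $-(-\Skindef)$, one sees that $-S_-|_{\cM_-}$ (viewed in the Hilbert space $(\cM_-,-\Skindef)$) satisfies precisely the defining identity \eqref{ecu S} for the frame operator of $\cF_-$. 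Hence the frame operator of $\cF_-$ is $-S_-|_{\cM_-}=D+K^*AK$, and its numerical range (with respect to $-\Skindef$) has infimum $\alpha_-$ and supremum $\beta_-$ by the standard computation $\alpha_-=\|(S_-')^{-1}\|^{-1}=\inf W(S_-')$, $\beta_-=\|S_-'\|=\sup W(S_-')$ recalled after \eqref{ecu S}.

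The statement about $\alpha_+,\beta_+$ is entirely symmetric: now one uses the dual representation \eqref{Edimburg} of Theorem~\ref{Scotland} together with Corollary~\ref{eses pm}(ii), which gives
\[
  S_+ = \matriz{A'+LD'L^*}{0}{0}{0}
\]
with respect to the decomposition \eqref{decomp2}. The same argument — now applied to $\cF_+$ as a frame for $(\cM_+,\Skindef)$, with $R(S_+)=\cM_+$ and $S_+|_{\cM_+}=A'+LD'L^*$ uniformly positive in $(\cM_+,\Skindef)$ by Theorem~\ref{Scotland} — yields $\alpha_+=\inf W(A'+LD'L^*)$ and $\beta_+=\sup W(A'+LD'L^*)$. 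Since everything reduces to the two already-established block representations plus the elementary Hilbert-space frame theory of Subsection~\ref{sec:frames}, the proof is short; essentially the only thing to write out is the identification of the compressed operator with the genuine frame operator of $\cF_\pm$.
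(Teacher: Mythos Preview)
Your proposal is correct and follows essentially the same route as the paper: both arguments use the block representation of $S_\pm$ from Corollary~\ref{eses pm}, restrict the quadratic form identity \eqref{eses positivos} to $f\in\cM_\mp$ respectively $f\in\cM_+$, and read off that the compression $D+K^*AK$ (resp.\ $A'+LD'L^*$) has numerical range $[\alpha_-,\beta_-]$ (resp.\ $[\alpha_+,\beta_+]$). The paper phrases the last step directly via the frame inequality \eqref{eq J frame bounds} rather than through the abstract identification of the frame operator, but this is only a difference in wording.
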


\begin{proof}
If $f\in \cM_-$ then, by \eqref{eses positivos} and \eqref{lomo},
\begin{align*}
  \sum_{i\in I_-} \big|\K{f}{f_i}\big|^2 &= \K{S_- f}{f}
	=-\K{\matriz{0}{0}{0}{D+K^*AK}\vect{0}{f}}{\vect{0}{f}} \\
  &= -\K{(D+K^*AK)f}{f}.
\end{align*}
As $\cF_-=\{f_i\}_{i\in I_-}$ is a frame for $(\cM_-,-\Skindef)$ with
frame bounds $0<\alpha_- \leq \beta_-$,
it is immediate that $\inf\, W(D+K^*AK)= \alpha_-$ and $\sup\, W(D+K^*AK)= \beta_-$.

Similarly, if $f\in \cM_+$ then, by \eqref{eses positivos} and \eqref{lomito},
\begin{align*}
  \sum_{i\in I_+} \big|\K{f}{f_i}\big|^2 &= \K{S_+ f}{f}
	=\K{\matriz{A'+LD'L^*}{0}{0}{0}\vect{f}{0}}{\vect{f}{0}} \\
  &= \K{(A'+LD'L^*)f}{f}.
\end{align*}
Since $\cF_+=\{f_i\}_{i\in I_+}$ is a frame for $(\cM_+,\Skindef)$ with
frame bounds $0<\alpha_+ \leq \beta_+$, it follows
that $\inf\, W(A'+LD'L^*)= \alpha_+$ and $\sup\, W(A'+LD'L^*)= \beta_+$.
\end{proof}

In what follows, we derive formulae for the $J$-frame bounds of the
canonical dual $J$-frame $\cF'=\{S^{-1}f_i\}_{i\in I}$.
Recall that it is also a $J$-frame for $\cH$,
and its $J$-frame operator is $S^{-1}$; see Proposition~\ref{canonical dual}.
In addition, $\cF'_\pm=\{S^{-1}f_i\}_{i\in I_\pm}$ is a frame
for $(\cM_\mp^{\ort}, \pm\Skindef)$,
i.e.\ there exist constants $0<\gamma_\pm \leq \delta_\pm$ such that
\[
  \gamma_\pm (\pm\K{f}{f}) \leq \sum_{i\in I_\pm} \big|\K{f}{S^{-1}f_i}\big|^2
  \leq \delta_\pm (\pm\K{f}{f}) \qquad \text{for every $f\in \cM_\mp^{\ort}$}.
\]
The optimal set of constants $0<\gamma_\pm \leq \delta_\pm$
are the $J$-frame bounds of $\cF'$.

\begin{proposition}\label{pr:dualJframebounds}
Let $\cF$ be a $J$-frame for $\cH$ with $J$-frame operator $S$, and
let $0<\gamma_\pm \leq \delta_\pm$ be the $J$-frame bounds of the
canonical dual $J$-frame $\cF'$.
If the $J$-frame operator $S$ is represented as in \eqref{Cork}, then
\[
  \gamma_+ = \inf\, W(A^{-1})  \qquad \text{and} \qquad	\delta_+ = \sup\, W(A^{-1}).
\]
If the $J$-frame operator $S$ is represented as in \eqref{Edimburg}, then
\[
  \gamma_- = \inf\, W((D')^{-1})  \qquad \text{and} \qquad \delta_- = \sup\, W((D')^{-1}).
\]
\end{proposition}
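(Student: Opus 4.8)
The plan is to deduce both assertions from Proposition~\ref{pr:Jframebounds} applied to the canonical dual $J$-frame $\cF'=\{S^{-1}f_i\}_{i\in I}$, using the block representations of $S^{-1}$ recorded in Theorem~\ref{Sminus1} and Remark~\ref{cuadril}. First I would recall from Proposition~\ref{canonical dual} that $\cF'$ is again a $J$-frame, that its $J$-frame operator is $S^{-1}$, that $\sgn\K{S^{-1}f_i}{S^{-1}f_i}=\sgn\K{f_i}{f_i}$ so that the partition of $I$ into $I_+$ and $I_-$ is the same for $\cF'$ as for $\cF$, and that $\ol{\linspan\{S^{-1}f_i:\ i\in I_\pm\}}=\cM_\mp^{\ort}$. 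Consequently $\cF'_\pm=\{S^{-1}f_i\}_{i\in I_\pm}$ is a frame for $(\cM_\mp^{\ort},\pm\Skindef)$, and $0<\gamma_\pm\le\delta_\pm$ are precisely its optimal frame bounds; so it suffices to compute these optimal bounds through Proposition~\ref{pr:Jframebounds} once $S^{-1}$ is written in a representation of type \eqref{Cork} or \eqref{Edimburg} relative to the correct fundamental decomposition for $\cF'$.

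For the first assertion I would suppose $S$ is represented as in \eqref{Cork} with respect to $\cH=\cH_+\sdo\cH_-$; by the proof of Theorem~\ref{Irland} this is the decomposition $\cM_-^{\ort}\sdo\cM_-$. By Remark~\ref{cuadril} the operator $S^{-1}$ is then represented, with respect to the \emph{same} decomposition, as in \eqref{Sinv}, and this is of the form \eqref{Edimburg} with $D'=Z=(D+K^*AK)^{-1}$, $L=K$, $A'=A^{-1}-KZK^*$, so that $A'+LD'L^*=A^{-1}$. Since $\cH_+=\cM_-^{\ort}=\ol{\linspan\{S^{-1}f_i:\ i\in I_+\}}$ is exactly the closed span of the non-negative vectors of $\cF'$, the decomposition $\cH=\cH_+\sdo\cH_-$ is the fundamental decomposition underlying an \eqref{Edimburg}-type representation of the $J$-frame operator $S^{-1}$ of $\cF'$. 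Applying the second part of Proposition~\ref{pr:Jframebounds} to $\cF'$ then yields $\gamma_+=\inf W(A'+LD'L^*)=\inf W(A^{-1})$ and $\delta_+=\sup W(A^{-1})$. The second assertion follows symmetrically: if $S$ is represented as in \eqref{Edimburg} with respect to $\cM_+\sdo\cM_+^{\ort}$, then by Remark~\ref{cuadril} the operator $S^{-1}$ is represented, with respect to the same decomposition, as in \eqref{Fritzy}, which is of the form \eqref{Cork} with $A=Y=(A'+LD'L^*)^{-1}$, $K=L$, $D=(D')^{-1}-L^*YL$, so $D+K^*AK=(D')^{-1}$; here $\cM_+^{\ort}=\ol{\linspan\{S^{-1}f_i:\ i\in I_-\}}$ is the closed span of the negative vectors of $\cF'$, and the first part of Proposition~\ref{pr:Jframebounds} applied to $\cF'$ gives $\gamma_-=\inf W((D')^{-1})$ and $\delta_-=\sup W((D')^{-1})$.

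The only genuinely delicate point, and the step I would treat most carefully, is checking that the fundamental decomposition with respect to which Remark~\ref{cuadril} presents $S^{-1}$ coincides with the one that Proposition~\ref{pr:Jframebounds} requires when applied to $\cF'$; concretely, that the roles of the ``positive'' and ``negative'' spectral subspaces are interchanged in passing from $\cF$ to $\cF'$ in exactly the way that turns \eqref{Sinv}, read as a formula for $S$, into a formula of type \eqref{Edimburg} for $S^{-1}$, and likewise for \eqref{Fritzy}. This is ensured by the relation $\ol{\linspan\{S^{-1}f_i:\ i\in I_\pm\}}=\cM_\mp^{\ort}$ from Proposition~\ref{canonical dual} together with the regularity of the maximal uniformly definite subspaces $\cM_\pm$, which gives $(\cM_\pm^{\ort})^{\ort}=\cM_\pm$. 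Once this matching is recorded, no further computation is needed beyond the identities $A'+LD'L^*=A^{-1}$ and $D+K^*AK=(D')^{-1}$ already noted in Remark~\ref{cuadril}.
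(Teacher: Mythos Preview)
Your argument is correct. Both your approach and the paper's hinge on Remark~\ref{cuadril}, which identifies \eqref{Sinv} as an \eqref{Edimburg}-type representation of $S^{-1}$ with $A'+LD'L^*=A^{-1}$ (and \eqref{Fritzy} as a \eqref{Cork}-type representation with $D+K^*AK=(D')^{-1}$), together with the fact from Proposition~\ref{canonical dual} that the subspaces $\cM'_\pm$ attached to $\cF'$ are $\cM_\mp^{\ort}$, so the fundamental decompositions line up.

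The difference is only in how the final step is packaged. You invoke Proposition~\ref{pr:Jframebounds} for $\cF'$ as a black box once the representation is matched. The paper instead re-derives the block forms of $(S^{-1})_+$ and $(S^{-1})_-$ in the spirit of Corollary~\ref{eses pm}, obtaining for instance $(S^{-1})_+=\begin{smallbmatrix} A^{-1} & 0 \\ 0 & 0\end{smallbmatrix}$ with respect to $\cM_-^{\ort}\sdo\cM_-$, and then computes $\K{(S^{-1})_+f}{f}=\K{A^{-1}f}{f}$ directly for $f\in\cM_-^{\ort}$. Your route is a bit more economical, since it reuses Proposition~\ref{pr:Jframebounds} rather than repeating its quadratic-form computation; the paper's route is slightly more self-contained and makes the operators $(S^{-1})_\pm$ explicit. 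Either way the delicate matching of decompositions that you single out is exactly the point that needs care, and you handle it correctly.
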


\begin{proof}
The $J$-frame operator $S^{-1}$ can be written as $S^{-1}=(S^{-1})_+ - (S^{-1})_-$,
where $(S^{-1})_+=Q^+S^{-1}$ and $(S^{-1})_-=-(I-Q)^+S^{-1}$ are positive operators
in the Krein space $\cH$; see \eqref{eses con q}.

Also, if $S$ is represented as in \eqref{Cork} then, by Remark \ref{cuadril}, $S^{-1}$
is represented as in \eqref{Sinv}. Moreover, following the same arguments as in
Corollary~\ref{eses pm} it can be proved that
\[
  (S^{-1})_+ = \matriz{A^{-1}}{0}{0}{0} \qquad \text{and} \qquad
  (S^{-1})_-=\matriz{KZK^*}{-KZ}{ZK^*}{-Z}.
\]

If $f\in \cM_-^{\ort}$ then, by \eqref{eses positivos} and the
above representation for $(S^{-1})_+$,
\begin{align*}
  \sum_{i\in I_+}|\K{f}{S^{-1}f_i}|^2 &=\K{(S^{-1})_+f}{f}
  =\K{\matriz{A^{-1}}{0}{0}{0}\vect{f}{0}}{\vect{f}{0}} \\
  &=\K{A^{-1}f}{f}.
\end{align*}
As $\{S^{-1}f_i\}_{i\in I_+}$ is a frame for $(\cM_-^{\ort},\Skindef)$ with
frame bounds $0<\gamma_+ \leq \delta_+$, it follows that $\inf\, W(A^{-1})= \gamma_+$
and $\sup\, W(A^{-1})= \delta_+$.

On the other hand, if the $J$-frame operator $S$ is represented as in \eqref{Edimburg}
then, by Remark~\ref{cuadril}, $S^{-1}$ is represented as in \eqref{Fritzy}. Then,
\[
  (S^{-1})_+=\matriz{Y}{-YL}{L^*Y}{-L^*YL} \qquad \text{and} \qquad
  (S^{-1})_-=\matriz{0}{0}{0}{-(D')^{-1}},
\]
and, since $\{S^{-1}f_i\}_{i\in I_-}$ is a frame for $(\cM_+^{\ort},-\Skindef)$ with
frame bounds $0<\gamma_-\leq \delta_-$, the characterization
of $\gamma_-$ and $\delta_-$ follows.
\end{proof}

\section{Spectrum of the $J$-frame operator}\label{sec:spectrum}

\noindent
In Theorem~\ref{th:spec_incl} below we describe the location of the spectrum of $S$.
In its proof we use the first and second Schur complements of $S$
according to the representations of $S$ given in Theorems~\ref{Scotland}
and \ref{Irland}, respectively.

\medskip

Let $S$ be a $J$-frame operator in a Krein space $(\cH,\Skindef)$
and assume first that $\cH=\cH_+\sdo\cH_-$ is a fundamental decomposition
such that $S$ is decomposed as in \eqref{Cork} in Theorem~\ref{Irland}.
In this case we use the second Schur complement of $S$, which is defined as
\[
  S_2(\lambda) \defeq D-\lambda+K^*A(A-\lambda)^{-1}AK, \qquad \lambda\in\rho(A).
\]
On $\rho(A)$ the spectra of the operator function $S_2(\lambda)$ and
the operator $S$ coincide, i.e.\
\begin{equation}\label{spec_eqSS2}
  0 \in \sigma(S_2(\lambda)) \quad\iff\quad \lambda \in \sigma(S)
  \qquad\text{for}\;\; \lambda\in\rho(A);
\end{equation}
see, e.g.\ \cite[Proposition~1.6.2]{Tretter08}.

Since $A$ and $D+K^*AK$ are uniformly positive in the
Hilbert spaces $(\mathcal{H}_+,\Skindef)$  and $(\mathcal{H}_-,-\Skindef)$, respectively,
their numerical ranges $W(A)$ and $W(D+K^*AK)$ are intervals in $(0,+\infty)$.
Also $D$ is self-adjoint in the Hilbert space $(\mathcal{H}_-,-\Skindef)$;
so its numerical range $W(D)$ is a real interval.

On the other hand, if $\cH=\cK_+\sdo\cK_-$ is a fundamental decomposition
of $\mathcal{H}$ such that $S$ is decomposed as in \eqref{Edimburg} in
Theorem~\ref{Scotland}, we use the first Schur complement of $S$,
which is defined as
\[
  S_1(\lambda) \defeq A'-\lambda+LD'(D'-\lambda)^{-1}D'L^*, \qquad \lambda\in\rho(D').
\]
On $\rho(D')$ the spectra of the operator function $S_1(\lambda)$ and
the operator $S$ coincide, i.e.\
\begin{equation*}
  0 \in \sigma(S_1(\lambda)) \quad\iff\quad \lambda \in \sigma(S)
  \qquad\text{for}\;\; \lambda\in\rho(D');
\end{equation*}
see, e.g.\ \cite[Proposition~1.6.2]{Tretter08}.
In this case the numerical ranges $W(D')$ and $W(A'+LD'L^*)$ are
intervals in $(0,+\infty)$ and $W(A')$ is a real interval.

\begin{theorem}\label{th:spec_incl}
Let $S$ be a $J$-frame operator in the Krein space $(\mathcal H, \Skindef)$.
\begin{myenum}
\item
If $S$ is decomposed as in \eqref{Cork}, assume
that $\ol{W(A)}=[a_-,a_+]$, $\ol{W(D)}=[d_-,d_+]$, and $\ol{W(D+K^*AK)}=[b_-,b_+]$.
Then, the spectrum of $S$ satisfies the following inclusions:
\begin{align}
  \sigma(S)\setminus\dR &\subseteq \biggl\{\lambda\in\dC \,:\, |\lambda-a_+|<a_+
  \;\;\text{and}\;\;  \RE\lambda>\frac{b_-}{2}\biggr\},
    \label{spec_incl1}\\[1ex]
  \sigma(S)\cap\dR &\subseteq \bigl[\min\{a_-,b_-\},\max\{a_+,d_+\}\bigr].
    \label{spec_incl2}
\end{align}
\item
If $S$ is decomposed as in \eqref{Edimburg}, assume
that $\ol{W(A')}=[a'_-,a'_+]$, $\ol{W(D')}=[d'_-,d'_+]$, and $\ol{W(A'+LD'L^*)}=[b'_-,b'_+]$.
Then, the spectrum of $S$ satisfies the following inclusions:
\begin{align}
  \sigma(S)\setminus\dR &\subseteq \biggl\{\lambda\in\dC \,:\, |\lambda-d'_+|<d'_+
  \;\;\text{and}\;\;  \RE\lambda>\frac{b'_-}{2}\biggr\}, \label{spec_incl3}
    \\[1ex]\nonumber
  \sigma(S)\cap\dR &\subseteq \bigl[\min\{d'_-,b'_-\},\max\{a'_+,d'_+\}\bigr].
\end{align}
\end{myenum}
\end{theorem}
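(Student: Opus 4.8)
The plan is to prove part (i) in detail using the second Schur complement $S_2(\lambda)=D-\lambda+K^*A(A-\lambda)^{-1}AK$; part (ii) then follows by the duality between the representations \eqref{Cork} and \eqref{Edimburg}, exactly as Theorem~\ref{Scotland} was deduced from Theorem~\ref{Irland}. By \eqref{spec_eqSS2} it suffices to show that, for $\lambda\notin\dR$ (so automatically $\lambda\in\rho(A)$ since $A$ is self-adjoint in a Hilbert space), the operator $S_2(\lambda)$ is boundedly invertible whenever $\lambda$ violates the claimed inclusions, and to handle the real part of the spectrum separately by a Schur-complement-free argument using the factorization of $S$ from the proof of Theorem~\ref{Irland}.

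First I would treat the non-real spectrum. Fix $\lambda=s+it$ with $t\neq 0$ and $x\in\cH_-$, $\|x\|=1$, and compute $\K{S_2(\lambda)x}{x} = \K{Dx}{x}-\lambda + \K{A(A-\lambda)^{-1}AKx}{Kx}$ in the Hilbert space $(\cH_-,-\Skindef)$. Writing $y=Kx$ and using the spectral theorem for the self-adjoint operator $A$ in $(\cH_+,\Skindef)$, the last term is $\int \frac{\mu^2}{\mu-\lambda}\,d\langle E_\mu y,y\rangle$. Taking imaginary parts, $\IM\K{S_2(\lambda)x}{x} = -t + t\int \frac{\mu^2}{|\mu-\lambda|^2}\,d\langle E_\mu y,y\rangle = t\bigl(\int \frac{\mu^2}{|\mu-\lambda|^2}\,d\langle E_\mu y,y\rangle - 1\bigr)$. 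Since $\|y\|^2=\|Kx\|^2\le\|K\|^2<1$ and $\mu^2/|\mu-\lambda|^2 = \mu^2/((\mu-s)^2+t^2)$, if $|\lambda-a_+|\ge a_+$, i.e.\ $s^2+t^2\ge 2a_+s$, equivalently $(s-a_+)^2+t^2\ge a_+^2$, one checks that $\mu^2/|\mu-\lambda|^2 < 1$ for all $\mu\in[a_-,a_+]$ (this is the geometric heart: the disc $|\lambda-a_+|<a_+$ is exactly $\{\lambda:\mu^2/|\mu-\lambda|^2 \text{ can exceed } 1 \text{ for some }\mu\le a_+\}$), so the integral is $<1$ and $\IM\K{S_2(\lambda)x}{x}$ has the sign of $-t$, bounded away from $0$; hence $0\notin\ol{W(S_2(\lambda))}$ and $\lambda\in\rho(S)$. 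For the bound $\RE\lambda>b_-/2$: taking real parts, $\RE\K{S_2(\lambda)x}{x} = \K{Dx}{x}-s+\int\frac{\mu^2(\mu-s)}{|\mu-\lambda|^2}\,d\langle E_\mu y,y\rangle$; the plan is to combine this with $\K{(D+K^*AK)x}{x}\ge b_-$ and the elementary inequality $\frac{\mu^2(\mu-s)}{(\mu-s)^2+t^2}\ge \frac{\mu^2}{2}\cdot\frac{2(\mu-s)}{(\mu-s)^2+t^2}\ge -\frac{\mu^2}{2\,}\cdot\frac{1}{|\mu-s|}\ge\ldots$ — more precisely, $\frac{\mu(\mu-s)}{(\mu-s)^2+t^2}\ge -\frac{1}{2}$ always, whence $\frac{\mu^2(\mu-s)}{|\mu-\lambda|^2}\ge -\frac{\mu}{2}$ (using $\mu>0$), so the integral is $\ge -\frac12\int\mu\,d\langle E_\mu y,y\rangle = -\frac12\K{AKx}{Kx} = -\frac12\K{K^*AKx}{x}$; therefore $\RE\K{S_2(\lambda)x}{x}\ge \K{Dx}{x}+\frac12\K{K^*AKx}{x}-s \ge \frac12\K{(D+K^*AK)x}{x}+\frac12\K{Dx}{x}-s$. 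Hmm — I need to be a little more careful to land exactly $b_-/2$; the cleaner route is to note $\K{Dx}{x} = \K{(D+K^*AK)x}{x}-\K{K^*AKx}{x}\ge b_- - \K{K^*AKx}{x}$, so $\RE\K{S_2(\lambda)x}{x}\ge b_- - \K{K^*AKx}{x}+\frac12\K{K^*AKx}{x}-s = b_- - \frac12\K{K^*AKx}{x}-s$, which is not obviously $\ge b_-/2-s$ unless $\K{K^*AKx}{x}\le b_-$; this does hold because $\K{K^*AKx}{x}\le\K{(D+K^*AK)x}{x}\le b_+$ is the wrong bound — so the main obstacle is getting the constant $b_-/2$ sharp, and I expect the right manipulation is to symmetrize: average the expression with its value under $\mu\mapsto$ the reflected point, or use $\frac{\mu^2(\mu-s)}{(\mu-s)^2+t^2}+\text{(something)}$; in the worst case one proves $\RE\lambda > 0$ cheaply and then sharpens. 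I would spend the bulk of the effort here.

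For the real spectrum \eqref{spec_incl2}: the plan is to bound $W(S)$ directly, but $S$ is not self-adjoint as a Hilbert-space operator, so instead use that $\sigma(S)\cap\dR$ consists of points where $S-\lambda$ is not boundedly invertible and, for real $\lambda$, test the block vectors $(x_+,0)$ and $(Kx_-,x_-)$ (the maximal definite subspaces) to extract, via the factorization $S=\begin{smallmatrix}\end{smallmatrix}$ from Theorem~\ref{Irland}'s proof, that $S-\lambda$ is invertible when $\lambda<\min\{a_-,b_-\}$ (use that both diagonal blocks $A-\lambda$ and $D+K^*AK-\lambda$ in the factored middle matrix are then uniformly positive, hence $S-\lambda$ differs from an invertible operator by a controllable perturbation) and when $\lambda>\max\{a_+,d_+\}$ (now $A-\lambda$ is uniformly negative and $D-\lambda$ is uniformly negative, and the Schur complement $S_2(\lambda) = (D-\lambda) + K^*A(A-\lambda)^{-1}AK$ is a uniformly negative operator plus a nonpositive one since $A(A-\lambda)^{-1}A\le 0$ when $\lambda>a_+$ — wait, $A(A-\lambda)^{-1}A$ has spectrum $\mu^2/(\mu-\lambda)<0$, so $K^*A(A-\lambda)^{-1}AK\le 0$, making $S_2(\lambda)$ uniformly negative, hence invertible). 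This last observation is clean and is the reason the bound is $\max\{a_+,d_+\}$ rather than involving $b_+$.

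Finally, part (ii) is obtained with no new work: if $S$ is a $J$-frame operator decomposed as in \eqref{Edimburg}, one applies part (i) to $S$ viewed through the representation \eqref{Cork} after swapping the roles of $\cH_\pm$ and of the pairs $(A,D)\leftrightarrow(D',A')$, $K\leftrightarrow L$ — equivalently, one repeats the argument above verbatim with the first Schur complement $S_1(\lambda)=A'-\lambda+LD'(D'-\lambda)^{-1}D'L^*$ in place of $S_2(\lambda)$, $D'$ uniformly positive playing the role $A$ played, and $A'+LD'L^*$ (with numerical range $[b'_-,b'_+]$) playing the role of $D+K^*AK$; the disc is then centered at $d'_+$ and the real-part bound is $b'_-/2$, and the real-spectrum interval is $[\min\{d'_-,b'_-\},\max\{a'_+,d'_+\}]$, as claimed. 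The main obstacle throughout is purely the scalar optimization behind the two constants $a_+$ (disc radius) and $b_-/2$ (half-plane); once those elementary inequalities on $\mu^2/|\mu-\lambda|^2$ and $\mu^2(\mu-s)/|\mu-\lambda|^2$ over $\mu\in[a_-,a_+]$ are pinned down, the rest is bookkeeping with Schur complements and numerical ranges.
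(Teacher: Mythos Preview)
Your attack on the disc bound $|\lambda-a_+|<a_+$ via the imaginary part of $\langle S_2(\lambda)x,x\rangle$ is essentially the paper's argument (the paper first rewrites $S_2(\lambda)=D+K^*AK-\lambda(I-K^*K)+\lambda^2K^*(A-\lambda)^{-1}K$ before taking the spectral integral, but the scalar inequality you isolate is the same one). Your upper real bound $\max\{a_+,d_+\}$ via the sign of $A(A-\lambda)^{-1}A$ in the Schur complement is also fine; the paper instead quotes \cite[Theorem~2.1]{LLMT05} for that part and proves the lower real bound $\min\{a_-,b_-\}$ directly from the rewritten $S_2(\lambda)$.

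The genuine gap is the half-plane bound $\RE\lambda>\frac{b_-}{2}$. Your direct estimate of $\RE\langle S_2(\lambda)x,x\rangle$ lands at $b_--\tfrac12\langle K^*AKx,x\rangle-s$, and as you noticed this does not give $b_-/2$ (there is no reason for $\langle K^*AKx,x\rangle\le b_-$; indeed $D$ can be negative). Trying harder with scalar inequalities on $\mu^2(\mu-s)/|\mu-\lambda|^2$ will not rescue this: the numerical range of $S_2(\lambda)$ need not avoid $0$ merely because $s\le b_-/2$. The paper's key idea here is \emph{not} a sharper scalar estimate but a passage to the inverse: by Theorem~\ref{Sminus1} and Remark~\ref{cuadril}, $S^{-1}$ has a representation of the form \eqref{Edimburg} with $D'=Z=(D+K^*AK)^{-1}$, so $\sup W(D')=1/b_-$. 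Applying the already-proved disc inclusion (in the form \eqref{spec_incl_pr11} for representation \eqref{Edimburg}) to $S^{-1}$ gives $\sigma(S^{-1})\setminus\dR\subseteq\{|\mu-\tfrac{1}{b_-}|<\tfrac{1}{b_-}\}$, and under $\lambda\mapsto 1/\lambda$ this disc becomes exactly the half-plane $\{\RE\lambda>\tfrac{b_-}{2}\}$. This interplay between the two representations of $S$ and $S^{-1}$ is the missing ingredient, and once you have it part (ii) follows symmetrically.
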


\begin{proof}
To prove (i), let us rewrite $S_2(\lambda)$ as follows:
\begin{align*}
  S_2(\lambda) &= D-\lambda+K^*A(A-\lambda)^{-1}(A-\lambda+\lambda)K \\[1ex]
  &= D-\lambda+K^*AK+\lambda K^*A(A-\lambda)^{-1}K \\[1ex]
  &= D+K^*AK-\lambda(I-K^*K)+\lambda^2K^*(A-\lambda)^{-1}K.
\end{align*}
Hence, for $g\in\cH_-$ with $\|g\|=1$ we obtain
\begin{align}
  \bigl\langle S_2(\lambda)g,g\bigr\rangle
  &= \bigl\langle (D+K^*AK)g,g\bigr\rangle-\lambda\bigl(1-\|Kg\|^2\bigr)
  +\lambda^2\bigl\langle(A-\lambda)^{-1}Kg,Kg\bigr\rangle
    \label{formS2_1}\\[1ex]
  &= \bigl\langle (D+K^*AK)g,g\bigr\rangle-\lambda\bigl(1-\|Kg\|^2\bigr)
  	\notag\\[1ex]
  &\quad+ \lambda^2\bigl\langle A(A-\lambda)^{-1}Kg,(A-\lambda)^{-1}Kg\bigr\rangle-\lambda|\lambda|^2\bigl\|(A-\lambda)^{-1}Kg\bigr\|^2.
    \label{formS2_2}
\end{align}
First, we show that
\begin{equation}\label{halfline}
  \bigl(-\infty,\min\{a_-,b_-\}\bigr) \subseteq \rho(S).
\end{equation}
Let $\lambda\in \bigl(-\infty,\min\{a_-,b_-\}\bigr)$.
Then $\lambda\in\rho(A)$ and $(A-\lambda)^{-1}$ is positive.
Therefore \eqref{formS2_1} yields
\begin{align*}
  \bigl\langle S_2(\lambda)g,g\bigr\rangle
  &\ge \bigl\langle(D+K^*AK)g,g\bigr\rangle - \lambda\bigl(1-\|Kg\|^2\bigr)
    \\[0.5ex]
  &\ge b_- - \lambda\bigl(1-\|Kg\|^2\bigr)
    \ge b_- - \max\{\lambda,0\}\bigl(1-\|Kg\|^2\bigr)
    \\[0.5ex]
  &\ge b_- - \max\{\lambda,0\} > 0
\end{align*}
for every $g\in\cH_-$ with $\|g\|=1$.
This implies that $0$ is not in the closure of the numerical range of $S_2(\lambda)$
and hence $0\in\rho(S_2(\lambda))$.
By \eqref{spec_eqSS2} this shows that $\lambda\in\rho(S)$.
Therefore \eqref{halfline} is proved.

In \cite[Theorem~2.1]{LLMT05} it was shown that
$\sigma(S)\cap\dR \subseteq \bigl[\min\{a_-,d_-\},\max\{a_+,d_+\}\bigr]$.
Since $A$ is a uniformly positive operator,
$d_-=\min\, \ol{W(D)}\leq \min\, \ol{W(D+K^*AK)}=b_-$.
This, together with \eqref{halfline} shows \eqref{spec_incl2}.

\medskip

Next, we show that
\begin{equation}\label{spec_incl_pr1}
  \sigma(S)\setminus\dR \subseteq \bigl\{\lambda\in\dC\setminus\dR \,:\, |\lambda-a_+|<a_+\}.
\end{equation}
Let $\lambda=x+iy$ be in the complement of the right-hand side of \eqref{spec_incl_pr1}
and assume that $\lambda$ is non-real, i.e.\ $y\ne0$ and
\begin{equation}\label{disc}
  x^2-2a_+x+y^2 \ge 0.
\end{equation}
Since the spectrum of $S$ is symmetric with respect to the
real axis, we may assume, without loss of generality, that $y>0$.
Consider the spectral function $E_t$ associated with the (positive) operator $A$:
\[
  A = \int_{a_-}^{a_+} t\; \rd E_t.
\]
Then, we can rewrite some terms that appear in \eqref{formS2_2} in
terms of $E_t$:
\begin{align*}
  \bigl\langle A(A-\lambda)^{-1}Kg,(A-\lambda)^{-1}Kg\bigr\rangle
  &= \int_{a_-}^{a_+} \frac{t}{|t-\lambda|^2}\,\rd \langle E_t Kg,Kg\rangle, \\
  \bigl\|(A-\lambda)^{-1}Kg\bigr\|^2
  &= \int_{a_-}^{a_+} \frac{1}{|t-\lambda|^2}\,\rd \langle E_t Kg,Kg\rangle.
\end{align*}
For $g\in\cH_-$ with $\|g\|=1$ we obtain from \eqref{formS2_2} that
\begin{align}
  \IM\bigl\langle S_2(\lambda)g,g\bigr\rangle
  &= -y\bigl(1-\|Kg\|^2\bigr) + 2xy\bigl\langle A(A-\lambda)^{-1}Kg,(A-\lambda)^{-1}Kg\rangle
    \notag\\[0.5ex]
  &\quad - y(x^2+y^2)\bigl\|(A-\lambda)^{-1}Kg\bigr\|^2
    \notag\\[0.5ex]
  &= -y\bigl(1-\|Kg\|^2\bigr) + y\int_{a_-}^{a_+}\frac{2xt-(x^2+y^2)}{|t-\lambda|^2}\,
  \rd\langle E_t Kg,Kg\rangle.
    \label{rhsImS2}
\end{align}
If $x\le0$, then the numerator of the fraction in the integral in \eqref{rhsImS2}
is negative as $t>0$;
if $x>0$, then \eqref{disc} yields
\[
  2xt-(x^2+y^2) \le 2xa_+-(x^2+y^2) \le 0.
\]
In both cases the integral in \eqref{rhsImS2} is non-positive and hence
\[
  \IM\langle S_2(\lambda)g,g\bigr\rangle \le -y\bigl(1-\|Kg\|^2\bigr)
  \le -y\bigl(1-\|K\|^2\bigr) < 0.
\]
This shows that $0$ is not in the closure of the numerical range of $S_2(\lambda)$
for such $\lambda$ and hence $0\in\rho(S_2(\lambda))$.
Again by \eqref{spec_eqSS2} this shows that $\lambda\in\rho(S)$.
Thus \eqref{spec_incl_pr1} is proved.

\medskip

It remains to show that
$\sigma(S)\setminus\dR \subseteq \bigl\{\lambda\in\dC\setminus\dR \,:\, \RE\lambda>\frac{b_-}{2}\}$,
but we postpone this until we have completed the proof of (ii).

\medskip

To prove (ii), note that
$S_1(\lambda)=A' + LD'L^* - \lambda(I-LL^*)+ \lambda^2 L(D'-\lambda)^{-1}L^*$
for $\lambda\in \rho(D')$.  Following the same arguments as above, it follows that
\[
  \sigma(S)\cap\dR \subseteq \bigl[\min\{d'_-,b'_-\},\max\{a'_+,d'_+\}\bigr]
\]
and
\begin{equation}\label{spec_incl_pr11}
  \sigma(S)\setminus\dR
  \subseteq \left\{\lambda\in\dC\setminus\dR \,:\, |\lambda-d'_+|<d'_+ \right\}.
\end{equation}
It follows from Remark~\ref{cuadril} that if $S$ is represented as in \eqref{Edimburg}
then the representation \eqref{Fritzy} corresponds to \eqref{Cork}
for $S^{-1}$.  Hence, applying \eqref{spec_incl_pr1} to the $J$-frame operator $S^{-1}$
we obtain that
\[
  \sigma(S^{-1})\setminus\dR
  \subseteq \left\{\lambda\in\dC\setminus\dR \,:\,
  \left|\lambda-\frac{1}{b'_-}\right|<\frac{1}{b'_-}\right\},
\]
because $Y=(A'+LD'L^*)^{-1}$ and
$\sup\, W((A'+LD'L^*)^{-1})=(\inf\, W(A'+LD'L^*))^{-1}=\frac{1}{b'_-}$.
Therefore,
\[
  \sigma(S)\setminus\dR
  = \Bigl\{\lambda\in\dC\setminus\dR \,:\, \frac{1}{\lambda}\in\sigma(S^{-1})\Bigr\}
  \subseteq \Bigl\{\lambda\in\dC\setminus\dR \,:\, \RE\lambda>\frac{b'_-}{2}\Bigr\}.
\]
This, together with \eqref{spec_incl_pr11}, yields \eqref{spec_incl3} and (ii) is shown.

\medskip

Finally, to conclude the proof of (i), note that if $S$ is represented
as in \eqref{Cork} then the representation \eqref{Sinv} corresponds
to \eqref{Edimburg} for $S^{-1}$ (see Remark~\ref{cuadril}).
Applying \eqref{spec_incl_pr11} to the $J$-frame operator $S^{-1}$ we obtain that
\[
  \sigma(S^{-1})\setminus\dR
  \subseteq \left\{\lambda\in\dC\setminus\dR \,:\,
  \left|\lambda-\frac{1}{b_-}\right|<\frac{1}{b_-}\right\},
\]
because $Z=(D+K^*AK)^{-1}$ and
$\sup\, W((D+K^*AK)^{-1})=(\inf\, W(D+K^*AK))^{-1}=\frac{1}{b_-}$.
Hence,
\[
  \sigma(S)\setminus\dR
  = \Bigl\{\lambda\in\dC\setminus\dR \,:\, \frac{1}{\lambda}\in\sigma(S^{-1})\Bigr\}
  \subseteq \Bigl\{\lambda\in\dC\setminus\dR \,:\, \RE\lambda>\frac{b_-}{2}\Bigr\}.
\]
This, together with \eqref{spec_incl_pr1}, yields \eqref{spec_incl1}.
\end{proof}

\begin{remark}
Let $S$ be a $J$-frame operator in $(\cH,\Skindef)$ and assume that $S$ is
represented as in \eqref{Cork}.  By \cite[Theorem~2.1]{LLMT05}, we have
another enclosure for the non-real spectrum of $S$:
\[
  |\IM z| \leq \|AK\| \qquad \text{for $z\in\sigma(S)\setminus \mathbb{R}$}.
\]
Therefore, if $K=0$ then the spectrum of $S$ is contained in $(0,+\infty)$.
Analogously, if $S$ is represented as in \eqref{Edimburg}
then $|\IM z|\leq \|LD'\|$ for every $z\in\sigma(S)\setminus \mathbb{R}$.
Thus, $L=0$ implies that $\sigma(S)\subseteq (0,+\infty)$.

Assume that $\cF=\{f_i\}_{i\in I}$ is a $J$-frame for $\cH$
with $J$-frame operator $S$, and $\cM_\pm$ are defined as in \eqref{emes}.
Then $K$ and $L$ are the angular operators of $\cM_+^{\ort}$ and $\cM_-$
according to the decompositions \eqref{decomp} and \eqref{decomp2}, respectively.
Hence, if $\cM_-=\cM_+^{\ort}$ we have that $K=L=0$ and $\sigma(S)\subseteq (0,+\infty)$.
Moreover, in this case $S$ is a uniformly positive operator in $(\cH,\Skdef)$.
\end{remark}

Given a $J$-frame $\cF=\{f_i\}_{i\in I}$ with $J$-frame operator $S$,
we can use Theorem~\ref{th:spec_incl} to give a spectral enclosure
for $\sigma(S)$ in terms of the $J$-frame bounds of $\cF$ and
its canonical dual $J$-frame $\cF'=\{S^{-1}f_i\}_{i\in I}$.

\begin{corollary}
Let $\cF=\{f_i\}_{i\in I}$ be a $J$-frame for a Krein space $(\cH, \Skindef)$
with $J$-frame operator $S$ and $J$-frame bounds $0<\alpha_\pm\leq \beta_\pm$.
Assume also that $0<\gamma_\pm\leq\delta_\pm$ are the $J$-frame bounds of
its canonical dual frame $\cF'=\{S^{-1}f_i\}_{i\in I}$.
Then, the spectrum of $S$ satisfies the following inclusions:
\begin{align}
  & \sigma(S)\cap\dR \subseteq \left[\varepsilon_-,\varepsilon_+\right],
  \label{real_minimal_spec}
  \\
  & \sigma(S)\setminus\dR
  \subseteq \left\{\lambda\in\dC \,:\, \RE \lambda > \tfrac{\alpha}{2},\;\;
  |\lambda- \tfrac{1}{\gamma}|< \tfrac{1}{\gamma}\right\},
  \label{minimal_spec}
\end{align}
where
\begin{align*}
  \varepsilon_- &= \max\left\{\min\left\{\frac{1}{\delta_+},\alpha_-\right\},
  \min\left\{\frac{1}{\delta_-},\alpha_+\right\}\right\},
  \\[1ex]
  \varepsilon_+ &= \min\left\{\max\left\{\frac{1}{\gamma_+},\beta_-\right\},
  \max\left\{\frac{1}{\gamma_-},\beta_+\right\}\right\},
  \\[1ex]
  \alpha &= \max\{\alpha_+,\alpha_-\} \qquad\text{and}\qquad
  \gamma=\max\{\gamma_+,\gamma_-\}.
\end{align*}
\end{corollary}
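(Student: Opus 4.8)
The plan is to apply Theorem~\ref{th:spec_incl} with respect to \emph{both} fundamental decompositions that a $J$-frame $\cF$ carries — the decomposition \eqref{decomp} determined by $\cM_-$, which produces the representation \eqref{Cork}, and the decomposition \eqref{decomp2} determined by $\cM_+$, which produces \eqref{Edimburg} — and then to rewrite the operator-theoretic spectral bounds appearing there in terms of the $J$-frame bounds of $\cF$ and of its canonical dual $\cF'$, using Propositions~\ref{pr:Jframebounds} and \ref{pr:dualJframebounds}. Intersecting the two enclosures then yields the claim.

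First I would take the representation \eqref{Cork}. Proposition~\ref{pr:Jframebounds} gives $\ol{W(D+K^*AK)}=[\alpha_-,\beta_-]$, so in the notation of Theorem~\ref{th:spec_incl}(i) one has $b_-=\alpha_-$. Proposition~\ref{pr:dualJframebounds} gives $\ol{W(A^{-1})}=[\gamma_+,\delta_+]$, and since $A$ is uniformly positive this forces $\ol{W(A)}=[1/\delta_+,1/\gamma_+]$, i.e.\ $a_-=1/\delta_+$ and $a_+=1/\gamma_+$. Moreover $K^*AK$ is a nonnegative operator, so $\langle Dg,g\rangle\le\langle(D+K^*AK)g,g\rangle$ for every $g\in\cH_-$; hence $d_+\le\beta_-$ and $\max\{a_+,d_+\}\le\max\{1/\gamma_+,\beta_-\}$. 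Substituting into Theorem~\ref{th:spec_incl}(i) gives $\sigma(S)\cap\dR\subseteq\bigl[\min\{1/\delta_+,\alpha_-\},\max\{1/\gamma_+,\beta_-\}\bigr]$ and $\sigma(S)\setminus\dR\subseteq\bigl\{\lambda:|\lambda-1/\gamma_+|<1/\gamma_+,\ \RE\lambda>\alpha_-/2\bigr\}$. Running the symmetric argument with \eqref{Edimburg} — using $b'_-=\alpha_+$, $d'_-=1/\delta_-$, $d'_+=1/\gamma_-$ from the same two propositions and $a'_+\le\beta_+$ from nonnegativity of $LD'L^*$ — Theorem~\ref{th:spec_incl}(ii) yields $\sigma(S)\cap\dR\subseteq\bigl[\min\{1/\delta_-,\alpha_+\},\max\{1/\gamma_-,\beta_+\}\bigr]$ and $\sigma(S)\setminus\dR\subseteq\bigl\{\lambda:|\lambda-1/\gamma_-|<1/\gamma_-,\ \RE\lambda>\alpha_+/2\bigr\}$.

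Finally I would intersect the two enclosures. The real part of $\sigma(S)$ lies in both real intervals, hence in $[\varepsilon_-,\varepsilon_+]$ with $\varepsilon_\pm$ exactly as in the statement, which is \eqref{real_minimal_spec}. For the non-real part, the two half-plane conditions combine to $\RE\lambda>\max\{\alpha_-,\alpha_+\}/2=\alpha/2$; and for the two disks one uses the elementary identity $\{|\lambda-r|<r\}=\{\RE\lambda>|\lambda|^2/(2r)\}$ for $r>0$, which shows the family of disks $\{|\lambda-r|<r\}$ is nested (increasing in $r$). Since $1/\gamma=\min\{1/\gamma_+,1/\gamma_-\}$, the intersection of the two disks is the smaller one, $\{|\lambda-1/\gamma|<1/\gamma\}$, and \eqref{minimal_spec} follows.

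The argument is essentially bookkeeping: the only non-mechanical points are the two monotonicity remarks ($d_+\le\beta_-$ and $a'_+\le\beta_+$, each because the term dropped from the Schur complement is a nonnegative operator) and the nesting of the disks $\{|\lambda-r|<r\}$. The place that needs care is matching the four pairs of $J$-frame bounds $\alpha_\pm,\beta_\pm,\gamma_\pm,\delta_\pm$ (and their reciprocals) against the interval endpoints $a_\pm,d_\pm,b_\pm$ and their primed counterparts in Theorem~\ref{th:spec_incl}, since interchanging any of them would corrupt the formulas for $\varepsilon_\pm$, $\alpha$, and $\gamma$.
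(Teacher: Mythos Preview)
Your proposal is correct and follows essentially the same approach as the paper: identify the numerical-range endpoints in Theorem~\ref{th:spec_incl} via Propositions~\ref{pr:Jframebounds} and~\ref{pr:dualJframebounds}, use nonnegativity of $K^*AK$ (resp.\ $LD'L^*$) to bound $d_+$ (resp.\ $a'_+$), and intersect the two resulting enclosures. Your explicit justification that the disks $\{|\lambda-r|<r\}$ are nested in $r$ is a small addition; the paper simply asserts the intersection without comment.
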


\begin{proof}
Given a $J$-frame $\cF=\{f_i\}_{i\in I}$ for $\cH$ with $J$-frame operator $S$,
let $\cM_\pm$ be given by \eqref{emes}. By Theorem \ref{Irland}, according to
the fundamental decomposition $\cH=\cM_-^{\ort}\sdo \cM_-$, $S$ can be represented
as in \eqref{Cork}. Also, according to the fundamental
decomposition $\cH=\cM_+\sdo \cM_+^{\ort}$, $S$ can be represented as in \eqref{Edimburg}.
By Propositions \ref{pr:Jframebounds} and \ref{pr:dualJframebounds},
\begin{alignat*}{3}
  \ol{W(A)} &= \left[\frac{1}{\delta_+},\frac{1}{\gamma_+}\right] \qquad & &\text{and} & \qquad
  \ol{W(D+K^*AK)} &= [\alpha_-,\beta_-],
  \\[1ex]
  \ol{W(D')} &= \left[\frac{1}{\delta_-},\frac{1}{\gamma_-}\right] \qquad & &\text{and} & \qquad
  \ol{W(A'+LD'L^*)} &= [\alpha_+,\beta_+].
\end{alignat*}
Since $A$ is uniformly positive, note that $\inf\, W(D)\leq \alpha_-$
and $\sup\, W(D)\leq \beta_-$. Analogously, the uniform positiveness of $D'$
implies that $\inf\, W(A')\leq \alpha_+$ and $\sup\, W(A')\leq \beta_+$.
Hence, by Theorem \ref{th:spec_incl}, we have that
\begin{align*}
  \sigma(S)\cap\dR &\subseteq \left[\min\left\{\frac{1}{\delta_+},\alpha_-\right\},
  \max\left\{\frac{1}{\gamma_+},\beta_-\right\}\right], \quad
  \\
  \sigma(S)\cap\dR &\subseteq \left[\min\left\{\frac{1}{\delta_-},\alpha_+\right\},
  \max\left\{\frac{1}{\gamma_-},\beta_+\right\}\right],
\end{align*}
and \eqref{real_minimal_spec} follows by intersecting the above intervals.
For the non-real part of the spectrum of $S$, Theorem \ref{th:spec_incl} yields
\begin{align*}
  \sigma(S)\setminus\dR &\subseteq \biggl\{\lambda\in\dC \,:\,
  \biggl|\lambda-\frac{1}{\gamma_+}\biggr|<\frac{1}{\gamma_+}
  \;\;\text{and}\;\;  \RE\lambda>\frac{\alpha_-}{2}\biggr\}, \quad
  \\[1ex]
  \sigma(S)\setminus\dR &\subseteq \biggl\{\lambda\in\dC \,:\,
  \biggl|\lambda-\frac{1}{\gamma_-}\biggr|<\frac{1}{\gamma_-}
  \;\;\text{and}\;\;  \RE\lambda>\frac{\alpha_+}{2}\biggr\},
\end{align*}
and \eqref{minimal_spec} follows by intersecting the two enclosures above.
\end{proof}

\begin{figure}[H]
\definecolor{ffzztt}{rgb}{1.,0.6,0.2}
\definecolor{uuuuuu}{rgb}{0.26666666666666666,0.26666666666666666,0.26666666666666666}
\definecolor{qqqqff}{rgb}{0.,0.,1.}
\definecolor{ffqqqq}{rgb}{1.,0.,0.}
\scalebox{0.6}{
\begin{tikzpicture}[line cap=round,line join=round,>=triangle 45,x=1.0cm,y=1.0cm]
\draw[->,color=black] (-1.,0.) -- (12.,0.);
\foreach \x in {-1.,1.,2.,3.,4.,5.,6.,7.,8.,9.,10.,11.}
\draw[shift={(\x,0)},color=black] (0pt,2pt) -- (0pt,-2pt);
\draw[->,color=black] (0.,-6.) -- (0.,6.);
\foreach \y in {-6.,-5.,-4.,-3.,-2.,-1.,1.,2.,3.,4.,5.}
\draw[shift={(0,\y)},color=black] (2pt,0pt) -- (-2pt,0pt);
\clip(-1.,-6.) rectangle (12.,6.);
\draw[color=ffzztt,fill=ffzztt,fill opacity=0.25] {[smooth,samples=50,domain=1.5:8.0] plot(\x,{0-sqrt(16.0-(\x-4.0)^(2.0))})} -- (8.,0.) {[smooth,samples=50,domain=8.0:1.5] -- plot(\x,{sqrt(16.0-(\x-4.0)^(2.0))})} -- (1.5,-3.122498999199199) -- cycle;
\draw [dash pattern=on 5pt off 5pt] (4.,0.) circle (4.cm);
\draw [dash pattern=on 5pt off 5pt] (0.5,-6.) -- (0.5,6.);
\draw [dash pattern=on 5pt off 5pt] (5.5,0.) circle (5.5cm);
\draw [dash pattern=on 5pt off 5pt] (1.5,-6.) -- (1.5,6.);
\begin{scriptsize}
\draw [fill=qqqqff] (4.,0.) circle (2.5pt);
\draw[color=qqqqff] (4.48,0.42) node {\large{$\gamma^{-1}=\gamma_-^{-1}$}};
\draw [fill=ffqqqq] (0.5,0.) circle (2.5pt);
\draw[color=ffqqqq] (0.98,0.42) node {\large{$\frac{\alpha_-}{2}$}};
\draw [fill=qqqqff] (1.5,0.) circle (2.5pt);
\draw[color=qqqqff] (1.98,0.42) node {\large{$\frac{\alpha}{2}=\frac{\alpha_+}{2}$}};
\draw [fill=ffqqqq] (5.5,0.) circle (2.5pt);
\draw[color=ffqqqq] (5.98,0.42) node {\large{$\gamma_+^{-1}$}};
\end{scriptsize}
\end{tikzpicture}
}
\caption{The region containing $\sigma(S)\setminus\dR$ given by the set in the
right-hand side of \eqref{minimal_spec}.}
\label{}
\end{figure}
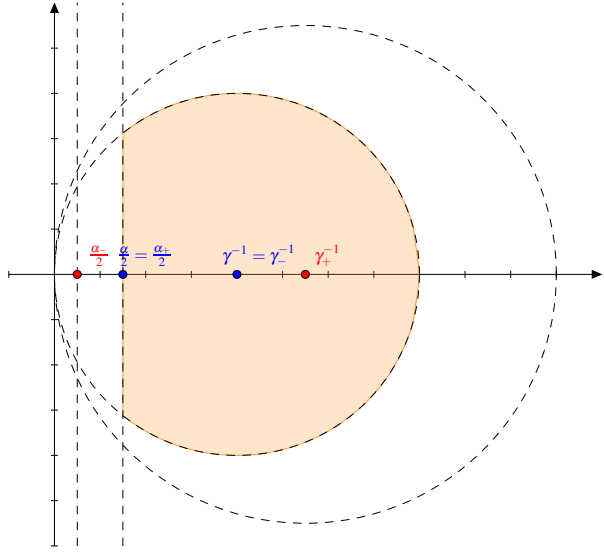

\section{Square root of a $J$-frame operator: applications to $J$-frames}
\label{sec:squareroot}

\noindent
It follows from Theorem~\ref{th:spec_incl} that if $S$ is a $J$-frame operator
acting on a Krein space $\cH$, then its spectrum is located in the open right half-plane.
Consider $f(z)=z^{1/2}$ as an analytic function on the open right half-plane
such that $\RE f(z)>0$ if $\RE z>0$.
Then, we can construct a square root of $S$ by means of the Riesz--Dunford
functional calculus:
\begin{equation}\label{sqrt}
  S^{1/2}\defeq\frac{1}{2\pi i}\int_\Gamma z^{1/2} (z-S)^{-1} \rd z,
\end{equation}
where $\Gamma$ is a Jordan curve in the right half-plane enclosing $\sigma(S)$.
The operator $S^{1/2}$ is also an invertible self-adjoint operator in the Krein space $\cH$.

The following theorem shows that this is (in some sense) the unique square root of $S$.

\begin{theorem}\label{unique sqrt}
Let $S$ be a $J$-frame operator in a Krein space $(\cH,\Skindef)$.
Then there exists a unique (bounded) operator $P$ acting on $\cH$ that
satisfies $P^2=S$ and
\begin{equation}\label{sp sqrt}
  \sigma(P) \subseteq
  \big\{z\in\dC: \ z=re^{it}, \ r>0,\ t\in(-\tfrac{\pi}{4},\tfrac{\pi}{4})\ \big\}.
\end{equation}
Moreover, such an operator is self-adjoint in the Krein space $(\cH,\Skindef)$ and
denoted by $S^{1/2}$.
\end{theorem}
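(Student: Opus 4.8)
The plan is to establish existence via the Riesz--Dunford functional calculus, which has in fact already been done in \eqref{sqrt}: the operator $S^{1/2}$ defined there satisfies $(S^{1/2})^2=S$ because $f(z)^2=z$ on the right half-plane, and its spectrum is $f(\sigma(S))$ by the spectral mapping theorem; since $\sigma(S)$ lies in the open right half-plane by Theorem~\ref{th:spec_incl}, the principal branch $f(z)=z^{1/2}$ maps it into the open sector $\{re^{it}:r>0,\ t\in(-\tfrac{\pi}{4},\tfrac{\pi}{4})\}$, so \eqref{sp sqrt} holds. That $S^{1/2}$ is self-adjoint in the Krein space follows from the symmetry $\sigma(S)=\ol{\sigma(S)}$ together with the fact that $f(\ol z)=\ol{f(z)}$; concretely, taking adjoints inside \eqref{sqrt} and using $((z-S)^{-1})^+=(\ol z-S)^{-1}$ together with a change of contour to the complex conjugate Jordan curve (which also encloses $\sigma(S)$) gives $(S^{1/2})^+=S^{1/2}$. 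So the substantive content of the theorem is \emph{uniqueness}.

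For uniqueness I would argue as follows. Suppose $P$ is any bounded operator on $\cH$ with $P^2=S$ and $\sigma(P)$ contained in the open sector $\Sigma\defeq\{re^{it}:r>0,\ t\in(-\tfrac\pi4,\tfrac\pi4)\}$. The key observation is that $P$ commutes with $S=P^2$, hence $P$ commutes with $(z-S)^{-1}$ for every $z\in\rho(S)$, and therefore $P$ commutes with $S^{1/2}$ by \eqref{sqrt}. Now consider the operator $P+S^{1/2}$. Both $P$ and $S^{1/2}$ have spectrum in the open right half-plane, so $\sigma(P)\subseteq\Sigma$ and $\sigma(S^{1/2})\subseteq\Sigma$ with $\RE>0$ on both spectra. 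Since $P$ and $S^{1/2}$ commute, the spectrum of their sum is contained in $\sigma(P)+\sigma(S^{1/2})\subseteq\{\RE z>0\}$; in particular $0\notin\sigma(P+S^{1/2})$, so $P+S^{1/2}$ is invertible. From $P^2=S=(S^{1/2})^2$ we get $(P-S^{1/2})(P+S^{1/2})=P^2-S^{1/2}P+PS^{1/2}-S=0$ using commutativity, and multiplying on the right by $(P+S^{1/2})^{-1}$ yields $P=S^{1/2}$.

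The remaining points are the self-adjointness of this unique $P$, which is automatic since we have just shown $P=S^{1/2}$ and $S^{1/2}$ was already seen to be self-adjoint in the Krein space, and a sanity check that the spectral mapping step is legitimate: the Riesz--Dunford calculus applies because $S$ is a bounded operator and $f$ is holomorphic on a neighbourhood of $\sigma(S)$ (any open half-plane strictly containing $\sigma(S)$, which exists by the positive distance to the imaginary axis guaranteed by Theorem~\ref{th:spec_incl}). I expect the main obstacle to be purely expository rather than mathematical: one must be careful that the spectral-inclusion statement $\sigma(P+S^{1/2})\subseteq\sigma(P)+\sigma(S^{1/2})$ for commuting bounded operators is invoked correctly (it follows, e.g., from the holomorphic functional calculus in the commutative Banach subalgebra generated by $P$ and $S^{1/2}$), and that the argument genuinely only uses commutativity of $P$ with $S^{1/2}$, which in turn rests on $P$ commuting with $S$. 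No Krein-space subtleties enter the uniqueness argument; the indefinite inner product plays a role only through Theorem~\ref{th:spec_incl}, which localizes $\sigma(S)$ so that $z^{1/2}$ can be defined as a single-valued holomorphic branch.
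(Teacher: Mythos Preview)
Your proof is correct. The existence and self-adjointness parts coincide with the paper's treatment (the paper is terser about self-adjointness, but your contour-conjugation argument is the standard justification). The uniqueness argument, however, takes a genuinely different route.

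The paper's uniqueness proof is multiplicative: it sets $U\defeq PS^{-1/2}$, observes $U^2=I$ so that $\sigma(U)\subseteq\{-1,1\}$, and then uses the product inclusion $\sigma(U)\subseteq\sigma(P)\cdot\sigma(S^{-1/2})\subseteq\{\RE z>0\}$ (citing \cite[Lemma~0.11]{RR03}) to force $\sigma(U)=\{1\}$; together with $U^2=I$ this yields $U=I$. Your argument is additive: you show $P+S^{1/2}$ is invertible via the sum inclusion $\sigma(P+S^{1/2})\subseteq\sigma(P)+\sigma(S^{1/2})$ for commuting operators, and then factor $0=P^2-(S^{1/2})^2=(P-S^{1/2})(P+S^{1/2})$. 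Both routes rest on the same two ingredients---commutativity of $P$ with $S^{1/2}$ via the functional calculus, and a spectral inclusion for commuting operators drawn from commutative Banach-algebra theory---so neither is materially more elementary. Your version is arguably a touch more direct, since it avoids the small extra step of deducing $U=I$ from $U^2=I$ and $\sigma(U)=\{1\}$; the paper's version has the minor advantage of citing a specific reference for the spectral inclusion it needs.
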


\begin{proof}
By the discussion above and the Spectral Mapping Theorem, the operator $S^{1/2}$
defined in \eqref{sqrt} satisfies the desired conditions.
In particular, $S^{1/2}$ is invertible and we denote its inverse by $S^{-1/2}$.

Assume that $P$ is another operator satisfying $P^2=S$ and \eqref{sp sqrt}.
Then, $PS=SP$ and also
\[
  PS^{-1/2}=S^{-1/2}P,
\]
as $g(z)=z^{-1/2}$ is an analytic function in an open domain
containing $\sigma(S)$; see \cite[Proposition~4.9]{C85}.
Thus, the operator $U\defeq PS^{-1/2}$ satisfies
\[
  U^2=(S^{-1/2}P)(PS^{-1/2})=S^{-1/2}\,S\,S^{-1/2}=I.
\]
Hence $\sigma(U)\subseteq\{-1,1\}$.

On the other hand, both $\sigma(P)$ and $\sigma(S^{-1/2})$ are contained in the
open domain defined in the right-hand side of \eqref{sp sqrt} and,
by \cite[Lemma~0.11]{RR03},
\[
  \sigma(U) \subseteq \{\lambda\cdot\mu:\, \lambda\in \sigma(P), \,
  \mu\in \sigma(S^{-1/2})\}\subseteq \{z\in\dC: \ \RE(z)>0\}.
\]
So, $\sigma(U)=\{1\}$, or equivalently, $U=I$ and $P=S^{1/2}$.
\end{proof}

In the following we characterize the family of $J$-frames for $\cH$ with a
prescribed $J$-frame operator, i.e.\ given a $J$-frame operator $S:\cH\ra\cH$
we describe those $J$-frames $\cF$ with synthesis operator $T:\ell_2(I)\ra \cH$
such that $TT^+=S$.  To do so, we show that the synthesis operator of
a $J$-frame admits a polar decomposition (in the Krein space sense).

\medskip

First, let us state some well-known facts about partial isometries in Krein spaces.
Given Krein spaces $\cH$ and $\cK$, a (bounded) operator $U:\cH\ra\cK$ is
a \emph{partial isometry} if $UU^+U=U$, an \emph{isometry} if $U^+U = I$,
a \emph{co-isometry} if $UU^+=I$, and a \emph{unitary operator} if both $U^+U=I$ and $UU^+=I$.

For instance, $U$ is a partial isometry if and only if there exist
regular subspaces $\cM$ of $\cH$ and $\cN$ of $\cK$ such that
\begin{enumerate}
\item[(1)] $U$ is a one-to-one mapping from $\cM$ onto $\cN$ satisfying
  \begin{equation*}
    {\K{Uf}{Ug}}_{\cK} =  {\K{f}{g}}_{\cH} , \qquad f,g\in\cM;
   \end{equation*}
\item[(2)] $N(U) = \cM^{\ort}$.
\end{enumerate}
In this case, we call $\cM$ and $\cN$ the \emph{initial} and \emph{final spaces for~$U$}.
Moreover, $U^+$ is a partial isometry with initial space $\cN$ and final space $\cM$, and
\begin{equation*}
\begin{aligned}
  U^+U &= E_\cM \,, &N(U) &= N(E_\cM)=\cM^{\ort}, \\
  UU^+ &= E_\cN \,, &N(U^+) &= N(E_\cN)=\cN^{\ort},
\end{aligned}
\end{equation*}
where $E_\cM$ and $E_\cN$ are the self-adjoint projections onto $\cM$ and $\cN$,
respectively; see, e.g.\ \cite[Theorems~1.7 and 1.8]{DR}.

\medskip

Before stating the polar decomposition for the synthesis operator
of a $J$-frame $\cF$, recall that we are considering $\ell_2(I)$ as
a Krein space with the indefinite inner product induced by $\cF$; see \eqref{l2}.

\begin{proposition}\label{polar desc}
Let $\cF$ be a $J$-frame for a Krein space $(\cH,\Skindef)$ with
synthesis operator $T:\ell_2(I)\ra \cH$.  Then $T$ admits a polar decomposition
in the Krein space sense: there exists a co-isometry $U:\ell_2(I)\ra\cH$
with initial space $N(T)^{\ort}$ such that
\begin{equation*}
  T=(TT^+)^{1/2} U,
\end{equation*}
where the square root is the one defined in \eqref{sqrt}.
Moreover, this factorization is unique in the following sense:
if $T=PW$ where $P$ is a self-adjoint operator in $\cH$ satisfying \eqref{sp sqrt}
and $W:\ell_2(I)\ra\cH$ is a co-isometry, then $P=(TT^+)^{1/2}$ and $W=U$.
\end{proposition}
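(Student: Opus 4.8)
The plan is to construct the co-isometry $U$ directly as $U \defeq S^{-1/2}T$ (where $S=TT^+$ and $S^{-1/2}=(S^{1/2})^{-1}$, which exists since $S^{1/2}$ is invertible by Theorem~\ref{unique sqrt}), and then verify its properties. First I would check that $U$ is a co-isometry, i.e.\ $UU^+=I$. Since $S^{1/2}$ is self-adjoint in the Krein space $\cH$, its inverse $S^{-1/2}$ is self-adjoint as well, so $U^+ = T^+ S^{-1/2}$; hence
\[
  UU^+ = S^{-1/2}T\,T^+S^{-1/2} = S^{-1/2}\,S\,S^{-1/2} = I,
\]
using $TT^+=S$. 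Then the factorization $T = S^{1/2}U$ is immediate by multiplying $U = S^{-1/2}T$ on the left by $S^{1/2}$. Next I would identify the initial space of $U$: from the general facts recalled before the statement, $U$ is a partial isometry with $U^+U = E_{\cM}$ the self-adjoint projection onto its initial space $\cM = N(U)^{\ort}$, and since $S^{-1/2}$ is invertible we have $N(U) = N(S^{-1/2}T) = N(T)$, so the initial space is $N(T)^{\ort}$, as claimed. (One should note here that $N(T)$ is a regular subspace of the Krein space $\ell_2(I)$; this follows because $\ell_2(I)$ is decomposed by \eqref{desc fund} and $T$ restricted to $N(T)^{\ort}$ is, up to the invertible factor $S^{1/2}$, the partial isometry $U$, so $N(T)^{\ort}$ is regular.)

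For the uniqueness part, suppose $T = PW$ with $P$ self-adjoint in $\cH$ satisfying \eqref{sp sqrt} and $W$ a co-isometry. Taking Krein-space adjoints gives $T^+ = W^+P$, so
\[
  S = TT^+ = PWW^+P = PP = P^2,
\]
because $WW^+=I$. Thus $P$ is a bounded operator with $P^2 = S$ whose spectrum lies in the sector described by \eqref{sp sqrt}; by the uniqueness clause of Theorem~\ref{unique sqrt} we conclude $P = S^{1/2}$. It then remains to deduce $W = U$. Since $P = S^{1/2}$ is invertible, $T = S^{1/2}W$ forces $W = S^{-1/2}T = U$. This completes the argument.

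The step I expect to require the most care is the regularity of $N(T)^{\ort}$ in $\ell_2(I)$, which is implicitly needed for the phrase ``co-isometry with initial space $N(T)^{\ort}$'' to make sense in the Krein-space framework (a partial isometry has regular initial and final spaces). The clean way to see it is to observe that $U = S^{-1/2}T$ is a co-isometry by the computation above, that any co-isometry is in particular a partial isometry, that $U^+U$ is then the self-adjoint projection onto a regular subspace equal to $N(U)^{\ort} = N(T)^{\ort}$, and hence $N(T)^{\ort}$ is regular by the characterization of partial isometries recalled before the statement. Everything else is a short manipulation with adjoints and the invertibility of $S^{1/2}$, together with a single appeal to the uniqueness part of Theorem~\ref{unique sqrt}.
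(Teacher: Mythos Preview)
Your proof is correct and follows essentially the same route as the paper: define $U\defeq S^{-1/2}T$, verify $UU^+=I$, read off $T=S^{1/2}U$ and $N(U)=N(T)$, and for uniqueness compute $TT^+=P^2$ and invoke Theorem~\ref{unique sqrt}. The only cosmetic difference is in justifying that $N(T)$ is regular in $\ell_2(I)$: the paper cites \cite[Lemma~4.1]{GMMM12} directly, whereas you extract it from the fact that $U$ is a co-isometry (hence a partial isometry) so that $U^+U$ is automatically a self-adjoint projection with regular range $N(U)^{\ort}=N(T)^{\ort}$; your second paragraph makes this clean, and it is a perfectly good alternative to the citation.
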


\begin{proof}
If $S=TT^+$ is the $J$-frame operator of $\cF$ and $S^{1/2}$ denotes
the self-adjoint square root defined in \eqref{sqrt},
then $U\defeq S^{-1/2}T$ is a co-isometry from the
Krein space $(\ell_2(I), \Skindef_2)$ onto $(\cH,\Skindef)$.
Indeed, it is immediate that
\[
  UU^+=(S^{-1/2}T)(T^+S^{-1/2})=S^{-1/2}\,S\, S^{-1/2}=I.
\]
Hence $U^+U$ is a self-adjoint projection in the Krein space $(\ell_2(I),\Skindef_2)$.
Also, $UU^+U=U$ implies that $N(U^+U)=N(U)=N(T)$, which is a
regular subspace of $\ell_2(I)$, see \cite[Lemma~4.1]{GMMM12}.
Hence, $N(T)^{\ort}$ is the initial space of $U$ and
\[
  T=S^{1/2}(S^{-1/2}\, T)=S^{1/2}\, U=(TT^+)^{1/2}\, U.
\]
Finally, let us show the uniqueness of such a factorization.
Assume that $T=PW$, where $P$ is a self-adjoint operator in $\cH$
satisfying \eqref{sp sqrt} and $W:\ell_2(I)\ra\cH$ is a co-isometry. Then,
\[
  TT^+= (PW)W^+P= P(WW^+)P=P^2.
\]
By Theorem \ref{unique sqrt}, we have that $P=S^{1/2}=(TT^+)^{1/2}$
and $U=S^{-1/2}T=S^{-1/2}PW=W$.
\end{proof}

If $\cF=\{f_i\}_{i\in I}$ is a $J$-frame for $\cH$ with
synthesis operator $T:\ell_2(I)\ra \cH$ and $J$-frame operator $S$,
then the synthesis operator of the canonical dual $J$-frame $\cF'=\{S^{-1}f_i\}_{i\in I}$
is given by $S^{-1}T$.  Therefore the following corollary is true.

\begin{corollary}
Let $\cF=\{f_i\}_{i\in I}$ be a $J$-frame for $\cH$ with
synthesis operator $T:\ell_2(I)\ra \cH$.
If $T$ is factorized as $T=(TT^+)^{1/2}U$ with a co-isometry $U:\ell_2(I)\ra \cH$,
then the canonical dual $J$-frame $\cF'=\{(TT^+)^{-1}f_i\}_{i\in I}$
has synthesis operator $(TT^+)^{-1/2}U:\ell_2(I)\ra \cH$.
\end{corollary}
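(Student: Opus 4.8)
The plan is to read off the result from Proposition~\ref{polar desc} together with the explicit form of the synthesis operator of the canonical dual. First I would recall, as noted in the paragraph just before the corollary, that if $T$ is the synthesis operator of $\cF$ and $S=TT^+$ is the associated $J$-frame operator, then the synthesis operator of the canonical dual $J$-frame $\cF'=\{S^{-1}f_i\}_{i\in I}$ is $S^{-1}T$. This is immediate on evaluating at the standard orthonormal basis vectors, $S^{-1}Te_i=S^{-1}f_i$, once one observes (Proposition~\ref{canonical dual}) that $\cF'$ has the same index partition $I_\pm$ as $\cF$, so that the Krein-space inner product on $\ell_2(I)$ is the same one used for both synthesis operators and the notion of $J$-frame operator is consistent for $\cF$ and $\cF'$.

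It then remains to rewrite $S^{-1}T$ in terms of $U$. By Proposition~\ref{polar desc} the co-isometry appearing in the factorization $T=(TT^+)^{1/2}U=S^{1/2}U$ is unique and given by $U=S^{-1/2}T$, where $S^{-1/2}$ denotes the inverse of the square root $S^{1/2}$ from \eqref{sqrt}. Hence
\[
  (TT^+)^{-1/2}U = S^{-1/2}U = S^{-1/2}\bigl(S^{-1/2}T\bigr) = S^{-1}T,
\]
which is exactly the synthesis operator of $\cF'$ identified in the first step. If one prefers not to invoke the uniqueness clause of Proposition~\ref{polar desc}, one can instead argue directly that $S^{-1}T = S^{-1}S^{1/2}U = S^{-1/2}U$, using that the Riesz--Dunford functional calculus on the open right half-plane (where $\sigma(S)$ lies, by Theorem~\ref{th:spec_incl}) is multiplicative, so that $S^{-1}S^{1/2}$ is the operator associated with the analytic function $z\mapsto z^{-1}z^{1/2}=z^{-1/2}$, i.e.\ $S^{-1/2}$.

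There is no substantial obstacle here: the corollary is essentially a one-line consequence of Proposition~\ref{polar desc} and the description of the dual synthesis operator. The only point meriting a word of care is the identity $S^{-1}S^{1/2}=S^{-1/2}$ (equivalently, $U=S^{-1/2}T$), which is handled either by the uniqueness statement in Proposition~\ref{polar desc} or by multiplicativity of the holomorphic functional calculus, as indicated above; everything else is a direct substitution.
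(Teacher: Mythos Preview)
Your proposal is correct and follows exactly the paper's approach: the paper simply notes that the synthesis operator of $\cF'$ is $S^{-1}T$ and then the corollary follows from $S^{-1}T=S^{-1}S^{1/2}U=S^{-1/2}U$. Your additional remarks about justifying $S^{-1}S^{1/2}=S^{-1/2}$ (via the uniqueness in Proposition~\ref{polar desc} or multiplicativity of the Riesz--Dunford calculus) are more explicit than the paper, which leaves this step implicit.
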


Let a $J$-frame operator $S$ on a Krein space $(\cH,\Skindef)$ be given.
Then, by definition, there exists
a $J$-frame $\cF=\{f_i\}_{i\in I}$ with synthesis operator $T:\ell_2(I)\ra \cH$
such that $TT^+=S$.  In general, for a given $J$-frame operator $S$
there exist many $J$-frames such that $S$ is the associated $J$-frame operator.
The relation between two such $J$-frames with the same $J$-frame operator $S$
can be expressed via their corresponding synthesis operators, which is done
in the following theorem.

\begin{theorem}
Let $S$ be a $J$-frame operator in $(\cH,\Skindef)$.
Assume that $T_k:\ell_2(I_k)\ra \cH$ is the synthesis operator of a $J$-frame
such that $S=T_k T_k^+$, for $k=1,2$.
Then, there exists a partial isometry $W:\ell_2(I_2)\ra\ell_2(I_1)$
(in the Krein space sense) with initial space $N(T_2)^{\ort}$ and
final space $N(T_1)^{\ort}$ such that
\[
  T_2=T_1W.
\]
\end{theorem}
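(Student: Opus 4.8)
The plan is to deduce this directly from the polar decomposition of synthesis operators established in Proposition~\ref{polar desc}. Since $S = T_1 T_1^+ = T_2 T_2^+$ is the common $J$-frame operator, that proposition provides co-isometries $U_k \colon \ell_2(I_k)\ra\cH$ with initial space $N(T_k)^{\ort}$ such that $T_k = S^{1/2}U_k$ for $k=1,2$, where $S^{1/2}$ is the square root from \eqref{sqrt}. Because $U_1$ is a co-isometry, $U_1 U_1^+ = I$, and hence $T_1 U_1^+ = S^{1/2}U_1 U_1^+ = S^{1/2}$. This identifies the natural candidate: I would set $W \defeq U_1^+ U_2 \colon \ell_2(I_2)\ra \ell_2(I_1)$, so that immediately $T_1 W = T_1 U_1^+ U_2 = S^{1/2}U_2 = T_2$, which is the desired factorisation.

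It then remains to check that $W$ has the asserted structure. First, $W$ is a partial isometry in the Krein space sense: using $U_1 U_1^+ = I$ together with the partial-isometry identity $U_2 U_2^+ U_2 = U_2$, one computes $W W^+ W = U_1^+ U_2 U_2^+ U_1 U_1^+ U_2 = U_1^+ U_2 U_2^+ U_2 = U_1^+ U_2 = W$. Next, using the identities recalled just before Proposition~\ref{polar desc} (namely $U_k^+ U_k = E_{N(T_k)^{\ort}}$, and $U_k U_k^+ = I$ since each $U_k$ is a co-isometry), I get $W^+ W = U_2^+ (U_1 U_1^+) U_2 = U_2^+ U_2 = E_{N(T_2)^{\ort}}$ and $W W^+ = U_1^+ (U_2 U_2^+) U_1 = U_1^+ U_1 = E_{N(T_1)^{\ort}}$. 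By the characterisation of partial isometries recalled in the excerpt, where $E_\cM$ and $E_\cN$ are the self-adjoint projections onto the initial and final spaces respectively, this shows precisely that $W$ has initial space $N(T_2)^{\ort}$ and final space $N(T_1)^{\ort}$, completing the argument.

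I do not expect a genuine obstacle here. The one point that deserves care is that a composition of two partial isometries need not be a partial isometry; in this situation it works because $U_1$ and $U_2$ are \emph{co-}isometries, so the projections $U_1 U_1^+$ and $U_2 U_2^+$ appearing in the cancellations above are the identity on $\cH$. Everything else reduces to routine manipulation of the defining identities for (co-)isometries in Krein spaces and the existence part of the polar decomposition in Proposition~\ref{polar desc}; the uniqueness statements in Proposition~\ref{polar desc} and Theorem~\ref{unique sqrt} are not needed for this particular result.
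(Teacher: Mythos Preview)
Your proof is correct and follows essentially the same route as the paper: define $W=U_1^+U_2$ from the polar decompositions $T_k=S^{1/2}U_k$, compute $W^+W=E_{N(T_2)^{\ort}}$ and $WW^+=E_{N(T_1)^{\ort}}$ using $U_kU_k^+=I$, and verify $T_1W=T_2$. The only cosmetic difference is that you check $WW^+W=W$ separately before computing the two projections, whereas the paper infers the partial-isometry property directly from those projection identities.
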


\begin{proof}
By Proposition \ref{polar desc}, there exist co-isometries $U_k:\ell_2(I_k)\ra \cH$
with initial spaces $N(T_k)^{\ort}$ such that $T_k=S^{1/2}U_k$.
Set $W\defeq U_1^+U_2$.  Then
\begin{align*}
  WW^+ &= (U_1^+U_2)(U_1^+U_2)^+=U_1^+(U_2U_2^+)U_1=U_1^+U_1=E_1, \\
  W^+W &= (U_1^+U_2)^+(U_1^+U_2)=U_2^+(U_1U_1^+)U_2=U_2^+U_2=E_2,
\end{align*}
where $E_k$ is the self-adjoint projection onto $N(T_k)^{\ort}$.
Therefore $W$ is a partial isometry (in the Krein space sense)
with initial space $N(T_2)^{\ort}$ and final space $N(T_1)^{\ort}$.
Furthermore, $T_1W=(S^{1/2}U_1)(U_1^+U_2)=S^{1/2}U_2=T_2$.
\end{proof}

Finally, we obtain in the last Corollary \ref{xmas}
of Proposition \ref{polar desc}  for a given $J$-frame operator $S$ a description
of all $J$-frames such that $S$ is the associated $J$-frame operator:
There is a bijection between all co-isometries and all $J$-frames
with the same $J$-frame operator.

\begin{corollary}\label{xmas}
Let $S$ be a $J$-frame operator in $(\cH,\Skindef)$.
Then, $T:\ell_2(I)\ra \cH$ is the synthesis operator of a $J$-frame
such that $S=T T^+$ if and only if there exists a co-isometry $U:\ell_2(I)\ra\cH$
such that
\[
  T=S^{1/2}U.
\]
\end{corollary}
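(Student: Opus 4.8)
The plan is to prove the two implications of Corollary~\ref{xmas} separately, leaning on Proposition~\ref{polar desc} for the harder direction. For the ``only if'' direction, suppose $T\colon\ell_2(I)\to\cH$ is the synthesis operator of a $J$-frame with $TT^+=S$. Then Proposition~\ref{polar desc} immediately gives the polar decomposition $T=(TT^+)^{1/2}U=S^{1/2}U$ with a co-isometry $U\colon\ell_2(I)\to\cH$ (with initial space $N(T)^{\ort}$, although for the corollary we only need that $U$ is a co-isometry). So this direction is essentially a one-line invocation.

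For the ``if'' direction, I would start with a co-isometry $U\colon\ell_2(I)\to\cH$ between Krein spaces and set $T\defeq S^{1/2}U$. The task is to show that $T$ is the synthesis operator of some $J$-frame and that $TT^+=S$. The identity $TT^+=S$ is a direct computation: $TT^+=(S^{1/2}U)(U^+S^{1/2})=S^{1/2}(UU^+)S^{1/2}=S^{1/2}S^{1/2}=S$, using that $S^{1/2}$ is self-adjoint in $\cH$ (so $(S^{1/2}U)^+=U^+(S^{1/2})^+=U^+S^{1/2}$), that $UU^+=I$, and that $S^{1/2}S^{1/2}=S$ by the functional calculus. The genuinely substantive point is to exhibit a $J$-frame whose synthesis operator is exactly this $T$: one needs a family $\cF=\{f_i\}_{i\in I}$ in $\cH$ such that $T x=\sum_i\langle x,e_i\rangle f_i$. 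The natural choice is $f_i\defeq Te_i=S^{1/2}Ue_i$, where $\{e_i\}_{i\in I}$ is the standard orthonormal basis of $\ell_2(I)$; then $T$ is by construction the synthesis operator of $\cF$ in the Hilbert-space sense, $\cF$ is a Bessel family (as $T$ is bounded), and $T$ is surjective (since $S^{1/2}$ is invertible and $U$ is surjective), so $\cF$ is at least a frame for the Hilbert space $(\cH,\Skdef)$.

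The main obstacle is then checking that this frame is in fact a \emph{$J$-frame}, i.e.\ that with $I_\pm=\{i: \pm[f_i,f_i]\ge 0\}$ (as in the Definition) the ranges $R(T_+)$ and $R(T_-)$ are maximal uniformly positive and maximal uniformly negative, respectively. Here I would appeal to Theorem~\ref{oldie}: since $S=TT^+$ is by hypothesis a $J$-frame operator, $S$ already satisfies conditions (i)--(iii) of Theorem~\ref{oldie}, and one must transfer this structural information to the partition of $\cF$. Concretely, I expect one should take the fundamental decomposition from Theorem~\ref{Irland} (or Theorem~\ref{Scotland}) associated with $S$ and show that $U$, being a co-isometry, maps the corresponding fundamental decomposition of $\ell_2(I)$ (the one induced by the sign partition, as in \eqref{desc fund} and \eqref{l2}) onto it in a sign-preserving way; the key facts are that $U$ restricted to its initial space is a Krein-space isometry and that $N(U)=N(T)$ is neutral-free enough (indeed regular, by \cite[Lemma~4.1]{GMMM12}) that $R(T_\pm)=S^{1/2}U(\ell_2(I_\pm))$ inherit the maximal definiteness from the structure of $S^{1/2}$ on the $\pm$ components. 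Once the sign structure of $\cF$ is matched to the block structure of $S$, the maximality and uniform definiteness of $R(T_\pm)$ follow, so $\cF$ is a $J$-frame; combined with $TT^+=S$, this completes the proof. Alternatively, and perhaps more cleanly, one may instead reduce to the already-proven Proposition~\ref{polar desc} by picking \emph{any} $J$-frame $\cF_0$ with synthesis operator $T_0$ and $T_0T_0^+=S$ (which exists by the definition of $J$-frame operator), writing $T_0=S^{1/2}U_0$, and observing that $T=S^{1/2}U=S^{1/2}U_0(U_0^+U)=T_0W$ with $W\defeq U_0^+U$ a co-isometry composed appropriately; transporting the $J$-frame property along $W$ is then the crux and is the same difficulty repackaged.
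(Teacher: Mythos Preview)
The paper offers no explicit proof of this corollary; it is presented as an immediate consequence of Proposition~\ref{polar desc}. Your ``only if'' argument is exactly what is intended and is correct.

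For the ``if'' direction you go further than the paper and correctly isolate a genuine difficulty, but your plan does not close the gap --- and in fact it cannot be closed without a sharper reading of the statement. The point you miss is that two Krein structures on $\ell_2(I)$ are in play: the one with respect to which $U$ is a co-isometry (and in which your computation $TT^+=S$ holds), and the one induced by the sign partition of $\{f_i\}=\{S^{1/2}Ue_i\}$ (which is what Definition~\ref{def:jframe_op} uses to form the $J$-frame operator). These need not agree. For a concrete obstruction, take $\cH=\dC^2$ with fundamental symmetry $\diag(1,-1)$, $S=I$, and $\ell_2(I)=\dC^3$ with $I_+=\{1\}$, $I_-=\{2,3\}$; the map $U$ with matrix $\left[\begin{smallmatrix}\sqrt{2}&1&0\\0&0&1\end{smallmatrix}\right]$ is a co-isometry, yet $f_2=Ue_2=(1,0)$ has $[f_2,f_2]=1>0$, so the sign partition of $\{f_i\}$ differs from the given one, and the resulting $J$-frame has $J$-frame operator $\diag(3,1)\ne I$. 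Hence ``$\cF$ is a $J$-frame'' together with ``$TT^+=S$ in the given structure'' does \emph{not} yield ``$S$ is the $J$-frame operator of $\cF$''. Neither of your sketched approaches addresses this compatibility; the corollary must be read with the tacit convention (which the paper assumes without comment) that the indefinite structure on $\ell_2(I)$ is already the one attached to the $J$-frame, so that the two structures coincide by fiat.
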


\bigskip

\noindent
\textbf{Acknowledgements} \\
J.~Giribet, A.~Maestripieri, and F.~Mart\'{\i}nez Per\'{\i}a gratefully
acknowledge the support from CONICET PIP 0168.
J.~Giribet gratefully acknowledges the support from ANPCyT PICT-1365.
F.~Mart\'{\i}nez Per\'{\i}a gratefully acknowledges the support from UNLP 11X681.
L.~Leben gratefully acknowledges the support from the Carl-Zeiss-Stiftung.
Juan I. Giribet, Leslie Leben, Alejandra Maestripieri,
F.~Mart\'{\i}nez Per\'{\i}a, and C.~Trunk gratefully acknowledge the support
from Deutscher Akademischer Austauschdienst/Bundesministerium
f\"ur Bildung und Forschung, Project-ID 57130286.

\bibliographystyle{amsplain}

\end{document}